\newcommand{\abs}[1]{\left\lvert #1 \right\rvert}
  \newcommand{\eps}{\epsilon}
  \newcommand{\GGG}{\mathfrak{G}}
   \newcommand{\Bb}{\mathcal{B}}
  \newcommand{\Ee}{\mathcal{E}}
  \newcommand{\Ff}{\mathcal{F}}
  \newcommand{\Ii}{\mathcal{I}}
  \newcommand{\Jj}{\mathcal{J}}
  \newcommand{\Kk}{\mathcal{K}}
  \newcommand{\RR}{\mathbb{R}}
  \newcommand{\NN}{\mathbb{N}}
  \newcommand{\limn}{\lim_{n\to\infty}}
  \newcommand{\floor}[1]{\left\lfloor #1 \right\rfloor}
  \renewcommand{\subset}{\subseteq}
  \newcommand{\E}{\mathbf{E}}
  \newcommand{\I}{\mathbf{I}}
  \newcommand{\Z}{\mathbf{Z}}
  \newcommand{\Poi}{\mathrm{Poi}}
  \newcommand{\dtv}{d_{TV}}
  \renewcommand{\P}{\mathbf{P}}
  \newcommand{\1}{\mathbf{1}}
\begin{document}
\newcommand{\Cy}[1]{C_{#1}}
\newcommand{\Cyinf}[1]{C^{(\infty)}_{#1}}
\newcommand{\badw}[2][n]{B^{(#1)}_{#2}}

\theoremstyle{plain}
\newtheorem{thm}{Theorem}
\newtheorem{lemma}[thm]{Lemma}
\newtheorem{prop}[thm]{Proposition}
\newtheorem{cor}[thm]{Corollary}
\newtheorem{clm}[thm]{Claim}

\theoremstyle{definition}
%%%%% 20110916a Toby made definitions be numbered with theorems
\newtheorem{defn}[thm]{Definition}

\theoremstyle{remark}
\newtheorem{rmk}[thm]{Remark}
\newtheorem{exm}[thm]{Example}
\tikzset{vert/.style={circle,fill,inner sep=0,
    minimum size=0.12cm,draw}}

\title[Exchangeable pairs, switchings, and random regular graphs]{Exchangeable pairs, switchings,\protect\\and random regular graphs}
\author{Tobias Johnson}
\address{University of Southern California\\Department of Mathematics\\3620 S.~Vermont Ave, KAP~108\\
  Los Angeles, CA 90089}
\email{tobias.johnson@usc.edu}
\keywords{Switchings, Stein's method, exchangeable pairs, random regular graphs,
linear eigenvalue statistics}
\subjclass[2010]{05C80, 60B10, 60B20}

%\date{August 28, 2014}
\thanks{The author acknowledges support from the NSF by grants
DMS-0847661 and DMS-1401479}
\begin{abstract}
  We consider the distribution of cycle counts in a random regular
  graph, which is closely linked to the graph's spectral properties.
  We broaden the asymptotic regime in which the cycle counts
  are known to be
  approximately Poisson, and we give an explicit bound
  in total variation distance for the approximation.  Using this result,
  we calculate limiting distributions of linear eigenvalue statistics
  for random regular graphs.
  
  Previous results on the distribution of
  cycle counts by McKay, Wormald, and Wysocka (2004) used the method of
  switchings, a combinatorial technique for asymptotic enumeration.  Our
  proof uses Stein's method of exchangeable pairs and demonstrates
  an interesting connection between the two techniques.
\end{abstract}

\maketitle

\section{Introduction}
Suppose that $\lambda_1,\ldots,\lambda_n$
are the eigenvalues of an $n\times n$ random matrix.
The random variable $\sum_{i=1}^n f(\lambda_i)$ for a given
function $f$ is known as a \emph{linear eigenvalue statistic},
and it is a common object of study in random matrix theory, typically
as $n$ tends to infinity.

Let $G$ be chosen uniformly at random from the space
of all simple $d$-regular graphs on $n$ vertices, and consider
its adjacency matrix.
Brendan McKay determined the first-order behavior of its linear eigenvalue
statistics, showing that $n^{-1}\sum_{i=1}^n f(\lambda_i)$ converged
in probability to a deterministic limit as $n\to\infty$ \cite{McK}.  
In \cite{DJPP},
the second-order behavior of linear eigenvalue statistics was computed
for a slightly different model of random regular graph, with improved results
given in \cite[Chapter~3]{Elliot}.
The motivating goal of this paper is to prove similar results
for uniformly chosen random regular graphs, 
which we carry out in Theorems~\ref{thm:dfixedlimit} and~\ref{thm:dgrowslimit}.

We will defer further discussion of this problem and its background
until Section~\ref{sec:eigenvalues}.  Until then, we discuss
several combinatorial and probabilistic results interesting
in their own right that we will
achieve along the way.
Let $\Cy{k}$ denote the number of cycles of length
$k$ in the random regular graph $G$.
The distribution of these random variables has been studied since 
\cite{Bol80,Wor81}, where it was proven that
 $(\Cy{3},\ldots,\Cy{r})$ converges in law
to a vector of independent Poisson random variables as $n$ tends to infinity,
with $r$ held fixed.  
As early as \cite{McK}, the cycle counts
of a graph have been used to investigate properties of the graph's
eigenvalues.  We take this approach as well, converting our original
problem into one of accurately estimating the distribution of this random
vector.

The strongest results on the cycle counts of a random regular graph came
in \cite{MWW}, where the Poisson approximation was shown to hold even
as $d=d(n)$ and $r=r(n)$ grow with $n$, so long as $(d-1)^{2r-1}=o(n)$.
This is a natural boundary: in this asymptotic regime, 
all cycles in $G$ of length $r$ or less have disjoint edges,
asymptotically almost surely. If $(d-1)^{2r-1}$ grows any
faster, this fails.  This led the authors in \cite{MWW} to speculate that the
Poisson approximation failed beyond this threshold.
Surprisingly, this is not the case. 
In Theorem~\ref{thm:bestpoiapprox}, we give
a Poisson approximation for the cycle counts
that holds so long as $\sqrt{r}(d-1)^{\frac32r-1}=o(n)$.
We give a quantitative bound on the accuracy
of the approximation, which is
the necessary ingredient for our results on linear eigenvalue statistics.
As a bonus, we give in Theorem~\ref{thm:genpoiapprox}
a distributional approximation
not just of the cycle counts, but of a more general process defined by the cycles.

The Poisson approximation in \cite[Theorem~1]{MWW} uses
a combinatorial technique for asymptotic enumeration known as
the method of switchings.  We adapt this technique to use Stein's method
of exchangeable pairs for Poisson approximation.  We discuss both methods
further in the following section.  As noted in \cite{Wor}, they
have some obvious similarity, but 
we believe that this is the first time they
have been connected in a rigorous way.  This connection gives a novel
construction of an exchangeable pair for use with Stein's method, and
it allows the machinery of Stein's method to be used in
some new combinatorial settings.

In Section~\ref{sec:prelims}, we give some basic definitions
and preliminary estimates
on random regular graphs.
Section~\ref{sec:poisson}
presents our Poisson approximation.
The core argument and the most general result
is Theorem~\ref{thm:genpoiapprox}, and our main result on cycle counts is
Theorem~\ref{thm:bestpoiapprox}.
In Section~\ref{sec:eigenvalues}, we give the context and proofs of our results
on linear
eigenvalue statistics of random regular graphs.
\subsection{Switchings and Stein's method}
The method of switchings,  pioneered
by Brendan McKay and Nicholas Wormald,
has been
applied to asymptotically enumerate combinatorial structures
that defy exact counts, including
Latin rectangles \cite{GM} and matrices with prescribed row and column sums
\cite{Mc84,MW,GMW}.  It has seen its biggest use in analyzing regular graphs;
see \cite{KSVW}, \cite{MWW}, \cite{KSV}, and \cite{BSK} for some examples.
A good summary of switchings in random regular graphs
can be found in Section~2.4 of \cite{Wor99}.
    
    The basic idea of the method is to choose
    two families of objects, $A$ and $B$, and investigate only their
    relative sizes.  To do this, one defines a set
    of switchings that
    connect elements of $A$ to elements of $B$.
    If every element of $A$ is connected to
    roughly $p$ objects in $B$,
    and every element in $B$ is connected to roughly
    $q$ objects in $A$, then 
    by a double-counting argument, $|A|/|B|$
    is approximately $q/p$.  When the objects in question
    are elements of a probability space, this gives an estimate of
    the relative probabilities of two events.

Stein's method (sometimes
called the Stein-Chen method when used for Poisson approximation)
is a powerful and elegant tool to compare two probability
distributions.  It was originally developed by Charles
Stein for normal approximation; its first published
use is \cite{St1}. Louis Chen adapted the method 
for Poisson approximation \cite{Chen1}.  Since then,
Stein, Chen, and a score of others have adapted Stein's method to
a wide variety of circumstances.  The survey
paper \cite{Ross} gives a broad introduction to Stein's method, 
and \cite{BHJ} and \cite{CDM} 
focus specifically on using it for Poisson approximation.

We will use the technique of exchangeable pairs,
following the treatment in \cite{CDM}.
Suppose we want to bound
the distance of the law of $X$ from the Poisson distribution.
The technique is to introduce an auxiliary randomization to $X$ to
get a new random variable $X'$ so that $X$ and $X'$ are exchangeable
(that is, $(X,X')$ and $(X', X)$ have the same law).
If $X$ and $X'$ have the right relationship---specifically,
if they behave
like two steps in an immigration-death 
process whose stationary distribution
is Poisson---then Stein's method
gives an easy proof that $X$ is approximately Poisson.

    Switchings and Stein's method have bumped into each other
    several times.
For instance,
both techniques have been used to study Latin rectangles \cite{Stein,GM},
and the analysis of random contingency tables in \cite{DS} is similar
to combinatorial work like \cite{GrM}.
Nevertheless, we believe that this is the first explicit connection
between the two techniques.
The essential idea is to use a random switching as the auxiliary randomization
in constructing an exchangeable pair.  

We believe the connection between switchings and Stein's method
may prove profitable to users of both techniques.  
Using Stein's method in conjunction with a switching argument
allows for a quantitative bound  on the accuracy of the approximation.
Stein's method
can also be used for approximation
by other distributions besides Poisson and for proving
concentration bounds  (see \cite{Cha}).
On the other hand, Stein's method cannot prove results as sharp
as \cite[Theorem~2]{MWW}, which gives an extremely accurate bound
on the probability that a random graph
has no cycles of length $r$ or less.
The bare-hands switching arguments used there might
be useful to anyone who needs a particularly sharp bound on
a Poisson approximation at a single point.

\section{Preliminaries}\label{sec:prelims}
A \emph{$d$-regular graph} is one for which all vertices have
degree exactly $d$.  We call a graph \emph{simple} if it has no
loops (edges between a vertex and itself) or parallel edges.
By \emph{random $d$-regular graph on $n$ vertices}, we mean
a random graph chosen uniformly from the space of all simple $d$-regular
graphs on $n$ vertices (unless we specifically refer to another model).
When $d$ is odd, we always assume that $n$ is even,
since there are no $d$-regular graphs on $n$ vertices with $d$ and $n$ odd.
By \emph{cycle}, we mean what is sometimes called a simple cycle:
a walk on a graph starting and ending at the same vertex, and with no
repeated edges or vertices along the way.  
For vertices $u$ and
$v$ in a graph, we will use the notation
$u\sim v$ to denote that the edge $uv$ exists.  The distance between
two vertices is the length of the shortest path between them,
and the distance between two sets of vertices is the shortest
distance between a vertex in one set and a vertex in the other.

Here and throughout, we will use $c_1,\,c_2,\ldots$
to denote absolute constants whose values are unimportant to us.
\begin{prop}\label{prop:McKayestimate}
  Let $G$ be a random $d$-regular graph on $n$ vertices,
  with $d\leq n^{1/3}$.
  \begin{enumerate}[(a)]
    \item \label{item:generalsubgraph}
      Suppose $H$ is a subgraph of the complete graph
      $K_n$ in which every vertex has degree $2$ or higher.
      Let $e$ be the number of edges and $v$ the number of vertices in $H$.
      Suppose $e\leq 2n^{1/10}$. Then
      \begin{align*}
        \P[H\subseteq G]\leq \frac{\Cl{generalsubgraph}(d-1)^e}{n^e}.
      \end{align*}
    \item\label{item:onecycle}
      Let $\alpha$ be a cycle of length $k\leq 2n^{1/10}$ in the complete graph
      $K_n$.  Then
      \begin{align*}
        \P[\alpha\subset G]&\leq \frac{\Cr{generalsubgraph}(d-1)^k}{n^k}.
      \end{align*}
    \item \label{item:twocycles}
      Let $\beta$ be another cycle in $K_n$ of length $j\leq n^{1/10}$,
      and suppose that $\alpha$ and $\beta$ share $f$ edges.
      Then
      \begin{align*}
        \P[\alpha\cup\beta\subset G]&\leq \frac{\Cr{generalsubgraph}(d-1)^{j+k-f}}{n^{j+k-f}}.
      \end{align*}
  \end{enumerate}
\end{prop}

\begin{proof}
  Statements~(\ref{item:onecycle}) and~(\ref{item:twocycles}) are specializations of
  (\ref{item:generalsubgraph}), which follows directly from Theorem~3a in \cite{MWW}.
\end{proof}

\section{Poisson approximation of cycle counts by Stein's method}
\label{sec:poisson}
\subsection{Stein's method background}
\label{subsec:stein1}

The main idea of Stein's method of exchangeable pairs
is to perturb a random variable $X$ to get a new random variable $X'$,
and then to examine
the relationship between the two.
The basic heuristic is that
if $(X,X')$ is exchangeable and
\begin{align*}
  \P[X'=X+1\mid X]&\approx \frac{\lambda}{c},\\
  \P[X'=X-1\mid X]&\approx \frac{X}{c},
\end{align*}
for some constant $c$, then 
$X$ is approximately Poisson with mean $\lambda$.
(When $X$ and $X'$ are 
two steps in a stationary immigration-death chain whose 
invariant distribution is Poisson with mean $\lambda$, these
equations hold exactly.)  The following proposition
gives a precise, multivariate version of this heuristic.
Recall that the total variation distance between the laws of two
random variables $X$ and $Y$ taking values in $\NN=\{0,1,2,\ldots\}$ is
given by
\begin{align*}
  \dtv(X,Y):= \sup_{A\subset\NN}\abs{\P[X\in A] - \P[Y\in A]}.
\end{align*}
\begin{prop}[{\cite[Proposition~10]{CDM}}]\label{prop:steinsmethod}
  Let $W=(W_1,\ldots,W_r)$ be a random vector taking values
  in $\NN^r$, and let the coordinates of
  $Z=(Z_1,\ldots,Z_r)$ be independent Poisson random variables with
  $\E Z_k=\lambda_k$.  Let $W'=(W_1',\ldots,W_r')$ be defined
  on the same space as $W$, with $(W,W')$ an exchangeable pair.
  
  For any choice of $\sigma$-algebra $\Ff$ with
  respect to which $W$ is measurable and any
  choice of constants $c_k$,
  \begin{align*}
    \dtv(W, Z)\leq
      \sum_{k=1}^r\xi_k\Big(\E\big|\lambda_k-c_k\P[\Delta^+_k\mid\Ff]\big|
      +\E\big| W_k-c_k\P[\Delta^-_k\mid\Ff]  \big|
      \Big),
  \end{align*}
  with $\xi_k=\min(1,1.4\lambda_k^{-1/2})$ and
  \begin{align*}
    \Delta^+_k &= \{W_k'=W_k+1,\ \text{$W_j=W'_j$ for $k<j\leq r$}\},\\
    \Delta^-_k &= \{W_k'=W_k-1,\ \text{$W_j=W'_j$ for $k<j\leq r$}\}.
  \end{align*}
\end{prop}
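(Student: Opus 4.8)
The plan is to run Stein's method for the product-Poisson law of $Z$ directly, using the exchangeable pair $(W,W')$ to verify the Poisson--Stein identity up to the error terms appearing in the bound; the asymmetric pinning in $\Delta^{\pm}_k$---only the coordinates above $k$---is precisely what makes this go through. First I fix $A\subset\NN^r$, set $h=\1_A$, and reduce to bounding $\abs{\E h(W)-\E h(Z)}$ uniformly in $A$. Because the coordinates of $Z$ are independent, I peel them off one at a time: with $\tilde h_k(w_k,\dots,w_r):=\E\,h(Z_1,\dots,Z_{k-1},w_k,\dots,w_r)$ one has the telescoping identity $h(w)-\E h(Z)=\sum_{k=1}^r\bigl(\tilde h_k(w_k,\dots,w_r)-\tilde h_{k+1}(w_{k+1},\dots,w_r)\bigr)$, and the $k$-th summand is a function of $w_k,\dots,w_r$ alone with mean zero in $w_k$ under $\Poi(\lambda_k)$. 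Solving the one-dimensional Stein equation for $\Poi(\lambda_k)$ in the variable $w_k$ produces $\tilde g_k(w_k,\dots,w_r)$, still a function of $w_k,\dots,w_r$ only, such that $\lambda_k\bigl(\tilde g_k(w_k+1,\dots)-\tilde g_k(w_k,\dots)\bigr)+w_k\bigl(\tilde g_k(w_k-1,\dots)-\tilde g_k(w_k,\dots)\bigr)$ equals that summand, and the classical ``magic factor'' estimate for the Poisson Stein equation bounds the first differences of $\tilde g_k$ in the $w_k$ variable by $\xi_k$. Writing $D^{\pm}_k(W):=\tilde g_k(W_k\pm1,W_{k+1},\dots,W_r)-\tilde g_k(W_k,\dots,W_r)$, this gives $\E h(W)-\E h(Z)=\sum_{k=1}^r\E\bigl[\lambda_k D^+_k(W)+W_k D^-_k(W)\bigr]$ with $\abs{D^{\pm}_k}\le\xi_k$.

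Now I bring in the exchangeable pair. Fix $k$, condition on $\Ff$, and split $\lambda_k=(\lambda_k-c_k\P[\Delta^+_k\mid\Ff])+c_k\P[\Delta^+_k\mid\Ff]$ and $W_k=(W_k-c_k\P[\Delta^-_k\mid\Ff])+c_k\P[\Delta^-_k\mid\Ff]$. Since $D^{\pm}_k(W)$ is a function of $W$, hence $\Ff$-measurable, with $\abs{D^{\pm}_k(W)}\le\xi_k$, the two ``error'' pieces contribute at most $\xi_k\bigl(\E\abs{\lambda_k-c_k\P[\Delta^+_k\mid\Ff]}+\E\abs{W_k-c_k\P[\Delta^-_k\mid\Ff]}\bigr)$ in absolute value---exactly the $k$-th term of the asserted bound---while $c_k\E[\P[\Delta^{\pm}_k\mid\Ff]D^{\pm}_k(W)]=c_k\E[\1_{\Delta^{\pm}_k}D^{\pm}_k(W)]$. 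So it remains to show that the ``main'' piece $c_k\E\bigl[\1_{\Delta^+_k}D^+_k(W)+\1_{\Delta^-_k}D^-_k(W)\bigr]$ vanishes. On $\Delta^+_k$ one has, by definition, $(W'_k,W'_{k+1},\dots,W'_r)=(W_k+1,W_{k+1},\dots,W_r)$, and since $\tilde g_k$ sees only these coordinates, $\1_{\Delta^+_k}D^+_k(W)=\1_{\Delta^+_k}\bigl(\tilde g_k(W'_k,\dots,W'_r)-\tilde g_k(W_k,\dots,W_r)\bigr)$, and likewise on $\Delta^-_k$ with $D^-_k$. Thus the main piece equals $c_k\E\bigl[(\1_{\Delta^+_k}+\1_{\Delta^-_k})\bigl(\tilde g_k(W'_k,\dots,W'_r)-\tilde g_k(W_k,\dots,W_r)\bigr)\bigr]$, whose integrand is \emph{antisymmetric} under $(W,W')\leftrightarrow(W',W)$: the swap turns an upward unit step of coordinate $k$ with the higher coordinates frozen into a downward one and conversely, interchanging $\Delta^+_k$ with $\Delta^-_k$ while negating the increment, so by exchangeability its expectation is $0$. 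Summing over $k$ and taking the supremum over $A$ yields the stated inequality.

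The heart of the argument---and the one place a naive attempt goes wrong---is choosing the peeling order so that the $k$-th one-dimensional Stein solution $\tilde g_k$ depends only on coordinates $k$ through $r$. This is exactly what makes the event $\Delta^+_k$, which pins only the coordinates strictly above $k$, coincide with the event governing the increment $D^+_k$, and hence what turns the cancellation of the main piece into an exact identity rather than something with residual error; had one used the full multivariate Stein solution instead, the low coordinates of $W'$ would fail to match those of $W$ on $\Delta^+_k$ and the antisymmetry would break. It also explains why no hypothesis on the size of $W'-W$ is needed: whatever the auxiliary randomization does in the coordinates below $k$ (including multi-unit moves) never enters $D^{\pm}_k$ and is absorbed entirely into the error terms $\E\abs{\lambda_k-c_k\P[\Delta^+_k\mid\Ff]}$ and $\E\abs{W_k-c_k\P[\Delta^-_k\mid\Ff]}$. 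The only genuinely analytic input, which I would cite rather than reprove, is the magic-factor bound $\abs{D^{\pm}_k}\le\xi_k$ on the solution of the Poisson Stein equation.
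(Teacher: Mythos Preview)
Your argument is correct and is precisely the proof from \cite{CDM}: peel off one Poisson coordinate at a time so that the $k$th Stein solution depends only on coordinates $k,\dots,r$, bound its first differences by the Barbour--Eagleson magic factor $\xi_k$, and cancel the main term by the antisymmetry of $(\1_{\Delta^+_k}+\1_{\Delta^-_k})\bigl(\tilde g_k(W')-\tilde g_k(W)\bigr)$ under exchangeability. The paper itself does not supply a proof---it cites \cite[Proposition~10]{CDM} and only remarks that conditioning on $\Ff$ rather than on $W$, and dropping the hypothesis $\E W_k=\lambda_k$, do not invalidate that argument---so there is nothing further to compare.
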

\begin{rmk}
  We have changed the statement of the proposition from
\cite{CDM} in two small ways:
we condition our probabilities on $\Ff$, rather than on $W$, and
we do not require that $\E W_k =\lambda_k$ (though
the approximation will fail if this is far from true).  Neither
change invalidates the proof of the proposition.
\end{rmk}

\begin{rmk}
There is a direct connection between switchings and a certain bare-hands
version of Stein's method.  Though this is not
what we use in this paper,
it is helpful in understanding why Stein's method
and the method of switchings are so similar.
If $(X,X')$ is exchangeable, then as explained in
\cite[Section~2]{St92},
one can  directly investigate
ratios of probabilities of different values of $X$ using the equation
\begin{align*}
  \frac{\P[X=x_1]}{\P[X=x_2]} &= \frac{\P[X'=x_1\mid X=x_2]}{\P[X'=x_2\mid
  X=x_1]}.
\end{align*}
This technique
bears a strong resemblance to the method of switchings: if we think
of $X$ as some property of a random graph (for example, number of cycles)
and $X'$ as that property after a random switching has been applied, then
this formula instructs us to count how many switchings change $X$
from $x_1$ to $x_2$ and vice versa, just as one does
when using switchings for asymptotic enumeration. 
\end{rmk}

\newcommand{\Gcond}{\widetilde{G}}
\newcommand{\Cycond}[1]{\widetilde{C}_{#1}}

\subsection{Counting switchings}
We start by defining our switchings.
Besides some small notational differences,
the definitions will be the same as those in \cite{MWW}.
To avoid repetition
 of the phrase ``cycles of length $r$ or less,'' we
 will refer to such cycles as \emph{short}.

\begin{figure}
  \begin{center}
    \begin{tikzpicture}[scale=1.2]
      \begin{scope}[bend angle=20]
      \fill[black!10, rounded corners] (-0.6,-0.5) rectangle (3.6, 1.7);
      \draw
            (0,1.2) node[vert,label=above:$v_0$] (v0) {}
            ++(1,0) node[vert,label=above:$v_1$] (v1) {}
            ++(1,0) node[vert,label=above:$v_2$] (v2) {}
            ++(1,0) node[vert,label=above:$v_3$] (v3) {}
            (-0.15,0) node[vert,label={[xshift=-0.09cm]below:$u_0$}] (u0) {}
            (0.15,0) node[vert,label={[xshift=0.09cm]below:$w_0$}] (w0) {}
            (0.85,0) node[vert,label={[xshift=-0.09cm]below:$u_1$}] (u1) {}
            (1.15,0) node[vert,label={[xshift=0.09cm]below:$w_1$}] (w1) {}
            (1.85,0) node[vert,label={[xshift=-0.09cm]below:$u_2$}] (u2) {}
            (2.15,0) node[vert,label={[xshift=0.09cm]below:$w_2$}] (w2) {}
            (2.85,0) node[vert,label={[xshift=-0.09cm]below:$u_3$}] (u3) {}
            (3.15,0) node[vert,label={[xshift=0.09cm]below:$w_3$}] (w3) {};
        \draw[thick] (v0)--(v1)--(v2)--(v3);
        \draw[thick, bend right] (v0) to (v3);   
        \draw[thick] (w0)--(u1) (w1)--(u2) (w2)--(u3) (u0) to[bend left] (w3);  
      \end{scope}
      \draw[<->, decorate, decoration={snake,amplitude=.4mm,
            segment length=2mm,post length=1.5mm,pre length=1.5mm}] 
            (3.7, 0.6)--(5.5,0.6);
      \begin{scope}[xshift=6.2cm]
        \fill[black!10, rounded corners] (-0.6,-0.5) rectangle (3.6, 1.7);
      \draw (0,1.2) node[vert,label=above:$v_0$] (v0p) {}
            ++(1,0) node[vert,label=above:$v_1$] (v1p) {}
            ++(1,0) node[vert,label=above:$v_2$] (v2p) {}
            ++(1,0) node[vert,label=above:$v_3$] (v3p) {}
            (-0.15,0) node[vert,label={[xshift=-0.09cm]below:$u_0$}] (u0p) {}
            (0.15,0) node[vert,label={[xshift=0.09cm]below:$w_0$}] (w0p) {}
            (0.85,0) node[vert,label={[xshift=-0.09cm]below:$u_1$}] (u1p) {}
            (1.15,0) node[vert,label={[xshift=0.09cm]below:$w_1$}] (w1p) {}
            (1.85,0) node[vert,label={[xshift=-0.09cm]below:$u_2$}] (u2p) {}
            (2.15,0) node[vert,label={[xshift=0.09cm]below:$w_2$}] (w2p) {}
            (2.85,0) node[vert,label={[xshift=-0.09cm]below:$u_3$}] (u3p) {}
            (3.15,0) node[vert,label={[xshift=0.09cm]below:$w_3$}] (w3p) {};
        \draw[thick] (u0p)--(v0p)--(w0p)
                     (u1p)--(v1p)--(w1p)
                     (u2p)--(v2p)--(w2p)
                     (u3p)--(v3p)--(w3p);
      \end{scope}
    \end{tikzpicture}
  \end{center}
  \caption{The change from left to right
  is a \emph{forward switching}, and from right to left is
  a \emph{backward switching}.}
  \label{fig:switchings}
\end{figure}
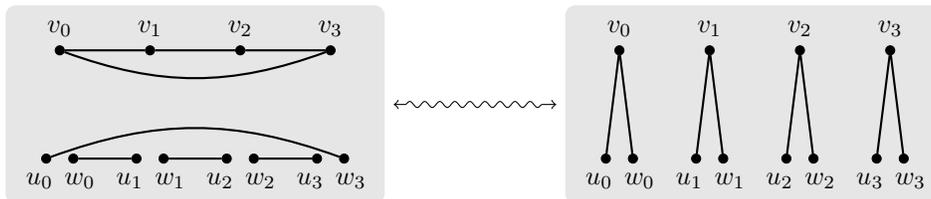

Let $G$ be a $d$-regular graph.
Suppose that $\alpha=v_0\cdots v_{k-1}$ is a cycle in $G$, and
let $e_i=v_iv_{i+1}$, interpreting
all indices modulo $k$ from now on.  Let $e_i'=w_iu_{i+1}$ for $0\leq i\leq k-1$
be oriented edges such that neither $u_i$ nor $w_i$ is adjacent to
$v_i$.  Consider the act of deleting these $2k$ edges and replacing them
with
the edges $v_iu_i$ and $v_iw_i$ for $0\leq i\leq k-1$
to obtain a new $d$-regular graph $G'$ with the cycle $\alpha$
deleted
(see Figure~\ref{fig:switchings}).  We call this action
induced given by the sequences $(v_i)$, $(u_i)$, and $(w_i)$
a \emph{forward $\alpha$-switching}.   We will consider forward 
$\alpha$-switchings
only up to cyclic rotation of indices; that is, we identify the $2k$
different $\alpha$-switchings obtained by cyclically rotating
all sequences $v_i$, $u_i$, and $w_i$.

To go the opposite direction, suppose $G$ contains oriented paths 
$u_iv_iw_i$ for $0\leq i\leq k-1$
such that $v_i\not\sim v_{i+1}$ and $w_i\not\sim u_{i+1}$.
Consider the act of deleting all edges $u_iv_i$ and $v_iw_i$
and replacing them with $v_iv_{i+1}$ and $w_iu_{i+1}$ for all
$0\leq i\leq k-1$ to create a new graph $G'$ that contains
the cycle $\alpha=v_0\cdots v_{k-1}$.  We call this a \emph{backwards
$\alpha$-switching}.
  Again, we consider switchings
only up to cyclic rotation of all indices.

We call an $\alpha$-switching \emph{valid} if  $\alpha$
 is the
only short cycle created or destroyed by the switching.
For each valid forward $\alpha$-switching taking $G$ to
$G'$, there is a corresponding valid backwards $\alpha$-switching
taking $G'$  to $G$.
Let $F_\alpha$ and $B_\alpha$
be the number of valid forward and backwards $\alpha$-switchings,
respectively, on some graph $G$.  
Using arguments drawn from \cite[Lemma~3]{MWW}, we give
some estimates on them.
\begin{lemma}\label{lem:forwardswitchings}
  Let $G$ be a deterministic $d$-regular graph on $n$ vertices
  with cycle counts $\{\Cy{k},\,k\geq 3\}$.
  For any short cycle $\alpha\subset G$ of length $k$,  
  \begin{align}
    F_\alpha\leq [n]_kd^k.\label{eq:fub}
  \end{align} 
  If $\alpha$ does not
  share an edge with another short cycle,
  \begin{align}
    F_\alpha&\geq
      [n]_kd^k\left(1- \frac{2k\sum_{j=3}^rj\Cy{j}+\Cl{4}k(d-1)^r}{nd}\right).
      \label{eq:flb}
   \end{align}
\end{lemma}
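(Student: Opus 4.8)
The plan is to prove both bounds by directly enumerating the data that specify a forward $\alpha$-switching. Having fixed a traversal $\alpha=v_0\cdots v_{k-1}$, such a switching is exactly a choice of vertex sequences $(u_i)_{0\le i<k}$ and $(w_i)_{0\le i<k}$ with $w_iu_{i+1}$ an edge of $G$ for every $i$ and with neither $u_i$ nor $w_i$ adjacent to $v_i$, subject also to the non-degeneracy conditions making $G'$ a simple $d$-regular graph and to the validity condition. For \eqref{eq:fub} it is enough to bound the number of such choices from above: the vertices $u_0,\dots,u_{k-1}$ are distinct in a forward switching, so they can be chosen in at most $[n]_k$ ways, and then, since $w_i$ must be one of the $d$ neighbours of $u_{i+1}$, the sequence $(w_i)$ can be chosen in at most $d^k$ ways; hence $F_\alpha\le[n]_kd^k$.

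For \eqref{eq:flb}, now under the standing assumption that $\alpha$ shares no edge with another short cycle, I would count the valid switchings by starting from the $[n]_kd^k$ pairs of sequences $(u_i)$ (distinct) and $(w_i)$ (with $w_iu_{i+1}$ an edge of $G$) and subtracting those that fail to give a valid forward $\alpha$-switching. Folding every degeneracy into the second case below, there are two ways a choice can fail. The first is that some edge $e_i'=w_iu_{i+1}$ lies on a short cycle of $G$: deleting $e_i'$ then destroys that cycle, and since $e_i'$ is not an edge of $\alpha$, the destroyed cycle is not $\alpha$, so the switching is invalid. (The switching also deletes the edges of $\alpha$, but by hypothesis these lie on no short cycle other than $\alpha$, so they cost nothing further.) There are at most $\sum_{j=3}^r j\Cy{j}$ edges lying on a short cycle, hence at most $2\sum_{j=3}^r j\Cy{j}$ oriented ones; fixing such an oriented edge as $e_i'$ pins down $w_i$ and $u_{i+1}$ and leaves at most $[n-1]_{k-1}d^{k-1}$ choices for the remaining vertices, so, summing over the $k$ values of $i$, the number of choices failing this way is at most $2k\bigl(\sum_{j=3}^r j\Cy{j}\bigr)[n-1]_{k-1}d^{k-1}=[n]_kd^k\cdot\frac{2k\sum_{j=3}^r j\Cy{j}}{nd}$.

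The second way a choice can fail is that the new edges $v_iu_i$ and $v_iw_i$ create a short cycle other than $\alpha$, or that the choice is degenerate (two deleted edges coincide, a new edge duplicates an existing one, and the like). A newly created short cycle must use a new edge, say $v_iu_i$, so it consists of that edge and a path $P$ of length at most $r-1$ from $u_i$ to $v_i$ in $G'$; if $P$ uses none of the $2k$ newly added edges (each incident to a vertex of $\alpha$), then $P$ is a path in $G$, and so $u_i$ lies within distance $r-1$ of $v_i$ in $G$. The remaining cases---$P$ using a new edge, and the various degeneracies---are more restrictive and are handled by estimates of the same shape (they force, for instance, $u_i$ within distance about $r$ of $w_i$ or of some $u_j$). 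Since a $d$-regular graph has at most $\Cl{70}(d-1)^{r-1}$ vertices within distance $r-1$ of a given vertex, fixing such a $u_i$ leaves at most $[n-1]_{k-1}d^k$ choices for the rest; because $d(d-1)^{r-1}=O((d-1)^r)$, summing over $i$ and over the finitely many subcases bounds the number of choices failing this way by $\Cr{4}k(d-1)^r[n-1]_{k-1}d^{k-1}=[n]_kd^k\cdot\frac{\Cr{4}k(d-1)^r}{nd}$ for a suitable absolute constant $\Cr{4}$. Subtracting the two estimates from $[n]_kd^k$ gives \eqref{eq:flb}.

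The hard part will be this second failure mode: one must enumerate every way the switching can create an unwanted short cycle or degenerate, and---more delicately---argue that a short cycle in $G'$ forces a short path in $G$, which rests on the switching adding only $2k$ new edges, each incident to a vertex of $\alpha$. The counting must also be organized so that each index $i$ costs only $O((d-1)^r)$ choices rather than, say, $O(k(d-1)^r)$; this is essentially the bookkeeping carried out in \cite[Lemma~3]{MWW}, which the present argument adapts.
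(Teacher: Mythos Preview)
Your outline is correct and close to the paper's argument. The upper bound and your first failure mode (some $e_i'$ on a short cycle, giving the $2k\sum_j jC_j$ term) match the paper's treatment exactly. The difference is in how you handle your second failure mode. You propose to directly characterize when the switching creates a new short cycle (or is degenerate) and then count those configurations, which, as you note, requires a careful case analysis of paths in $G'$ threading through several new edges.

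The paper sidesteps this analysis. Rather than characterizing invalidity, it writes down four \emph{sufficient} conditions on $(e_i')$ for the switching to be valid: (a) no $e_i'$ lies on a short cycle; (b) each $e_i'$ is at distance at least $r$ from $e_i$; (c) any two $e_i',e_{i'}'$ are at distance at least $r/2$ from each other; and (d) each $w_i$ is at distance at least $r$ from $u_i$. A short path-decomposition argument shows (a)--(d) force validity, and then one simply counts how many of the $[n]_kd^k$ choices violate each condition. Conditions (b), (c), (d) are pure distance constraints in $G$, so each violation count is immediately of the form $[n-1]_{k-1}d^{k-1}\cdot O\bigl(k(d-1)^r\bigr)$; condition (c) actually produces a $k^2(d-1)^{r/2}$ term, which is absorbed into $k(d-1)^r$ since $k\le r$ and $d\ge 3$. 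This buys precisely the bookkeeping you anticipated being delicate: by working with overly strong sufficient conditions stated entirely in $G$, the paper never has to enumerate how a short cycle in $G'$ might weave through multiple new edges.

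Your approach would work, but completing it rigorously means carrying out inside your case~(2) the same path-decomposition argument the paper uses to verify sufficiency of (a)--(d), only in the harder direction. The paper's device of imposing stronger-than-necessary distance conditions and bounding their failure is the cleaner route to the same estimate.
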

\begin{proof}
  The question is, with $\alpha=v_0\cdots v_{k-1}$
  and $e_i=v_iv_{i+1}$ given, how many ways are there to choose
  $e_0',\ldots,e_{k-1}'$ that give
  a valid switching?
  There are at most $[n]_kd^k$ choices of oriented
  edges $e_0',\ldots,e_{k-1}'$, which proves the upper bound \eqref{eq:fub}.
  For the lower bound, we demonstrate a procedure to choose
  these edges that is guaranteed to give us a valid forward
  $\alpha$-switching.  
  Suppose that
  $e_0',\ldots,e_{k-1}'$ satisfy
  \begin{enumerate}[(a)]
    \item $e_i'$ is not contained in any short cycle;
      \label{li:a}
    \item the distance from $e_i$ to $e_i'$ is at least $r$;
      \label{li:b}
    \item the distance from $e_i'$ to $e_{i'}'$ is at least $r/2$;
      \label{li:c}
    \item the distance from $w_i$ to $u_i$ is at least $r$.
      \label{li:d}
  \end{enumerate}
  Then the switching is valid by an argument identical to the one in \cite{MWW},
  which we will reproduce for convenience.
  By (\ref{li:b}), for all $i$, neither $u_i$ nor $w_i$ is adjacent
  to $v_i$ (or to $v_{i'}$ for any $i'$), as required in the definition
  of a switching.  Let $G'$
  be the graph obtained by applying the switching.
  We need to check now that the switching is valid; that is,
  the only short cycle it creates or destroys is $\alpha$.
  
  Since $\alpha$ shares no edges with other short cycles, its deletion
  does not destroy any other short cycles.  Condition (\ref{li:a}) ensures
  that no short cycles are destroyed by removing $e_0',\ldots,e_{k-1}'$.
  The switching does not create any short cycles either:
  Suppose otherwise, and let $\beta$
  be the new cycle in $G'$.  It consists of paths in $G\cap G'$,
  separated by new edges in $G'$.
  Any such path in $G\cap G'$ must have length at least $r/2$, because
  \begin{itemize}
    \item if it starts and ends in $\alpha$ and has length less than $r/2$,
      then combining this path with a path in $\alpha$
      gives a short cycle in $G$ that intersects $\alpha$;
    \item if it starts in $\alpha$ and finishes in  $W=\{u_0,w_0,\ldots,u_{k-1},
      w_{k-1}\}$ and has length less than $r/2$, then combining this path
      with a path in $\alpha$ gives a path violating condition (\ref{li:b});
    \item if it starts at some $e_i'$ and ends at $e_{i'}'$ then
      it must have length $r/2$ by (\ref{li:c}) if $i'\neq i$, and
      by (\ref{li:a}) if $i'=i$.
  \end{itemize}
  Thus $\beta$ contains exactly one path in $G\cap G'$.  The remainder of
  $\beta$ must be an edge $u_iv_i$ or $w_iv_i$, impossible by (\ref{li:b}),
  or a path $u_iv_iw_i$, impossible by (\ref{li:d}).
  
  Now, we find the number of switchings that satisfy conditions
  (\ref{li:a})--(\ref{li:d}) to get a lower bound on $F_\alpha$.
  We will do this by bounding from above the number of switchings
  out of the $[n]_kd^k$ counted in \eqref{eq:fub}
  that fail each condition (\ref{li:a})--(\ref{li:d}).
  \begin{itemize}
    \item
      There are a total of $\sum_{j=3}^rj\Cy{j}$ edges in short
      cycles in $G$.  Choosing one of the edges $e_0',\ldots,e_{k-1}'$
      from these and the rest arbitrarily, there are at most
      $[n-1]_{k-1}d^{k-1}k\sum_{j=3}^r2j\Cy{j}$ switchings 
      that fail condition (\ref{li:a}).
    \item
      The number of edges of distance less than $r$ from some edge is at most
      $2\sum_{j=0}^r(d-1)^j-1=O((d-1)^r)$.  
      At most
      $[n-1]_{k-1}d^{k-1}kO\big((d-1)^r\big)$ switchings then fail
      condition (\ref{li:b}).
    \item By a similar argument,
      at most $[n]_{k-1}d^{k-1}k^2O\big((d-1)^{r/2}\big)$ switchings fail
      condition (\ref{li:c}).
    \item By a similar argument, 
      at most $[n]_{k-1}d^{k-1}kO\big((d-1)^r\big)$ switchings fail
      condition (\ref{li:d}).
  \end{itemize}
  Adding these up and combining $O(\cdot)$ terms, we find that at most
  \begin{align*}
    [n-1]_{k-1}d^{k-1}k\left(\sum_{j=3}^r2j\Cy{j}+O\big((d-1)^r\big)\right)
  \end{align*}
  switchings out of the original $[n]_kd^k$ fail conditions
  by (\ref{li:a})--(\ref{li:d}), establishing \eqref{eq:flb}.
\end{proof}
  
For backwards switchings, we give a similar upper bound, but we 
only give our lower bound in expectation.
\begin{lemma}\label{lem:backwardsswitchings}
  Let $G$ be a random $d$-regular graph on $n$ vertices, and let
  $\alpha$ be a cycle of length $k\leq r$
  in the complete graph $K_n$.  Then
  \begin{align}
    B_\alpha &\leq \bigl(d(d-1)\bigr)^k\label{eq:bup}\\
    \intertext{and}
    \E B_\alpha &\geq \bigl(d(d-1)\bigr)^k\left(
      1 - \frac{\Cl{bs}k(d-1)^{r-1}}{n}\right).\label{eq:blp}
  \end{align}
\end{lemma}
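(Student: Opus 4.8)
The upper bound \eqref{eq:bup} is immediate from the definition. A backward $\alpha$-switching is determined by its oriented paths $u_iv_iw_i$, that is, by a choice for each $i$ of an ordered pair $(u_i,w_i)$ of distinct neighbours of $v_i$ in $G$; since $G$ is $d$-regular there are exactly $d(d-1)$ such pairs, hence exactly $\bigl(d(d-1)\bigr)^k$ such choices in all --- call them \emph{candidates} --- whatever $G$ is, and every backward $\alpha$-switching is a candidate. Since the number of candidates does not depend on $G$, writing $N$ for the number of candidates that fail to be valid backward $\alpha$-switchings we have $B_\alpha=\bigl(d(d-1)\bigr)^k-N$, so $\E B_\alpha=\bigl(d(d-1)\bigr)^k-\E N$, and \eqref{eq:blp} is equivalent to the estimate $\E N\le\Cr{bs}\,k(d-1)^{r-1}n^{-1}\bigl(d(d-1)\bigr)^k$.

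A candidate fails to be valid exactly when: (i) $v_i\sim v_{i+1}$ or $w_i\sim u_{i+1}$ for some $i$; (ii) the switched graph fails to be simple, which forces a coincidence among the at most $3k$ vertices $v_i,u_i,w_i$; (iii) a deleted edge $u_iv_i$ or $v_iw_i$ lies in a short cycle of $G$, so a short cycle is destroyed; or (iv) a short cycle other than $\alpha$ is created. The plan is to bound $\E N$ by summing, over a finite list of events covering (i)--(iv), the expected number of candidates in each. The first step for (iv) is to record conditions on $G$ and the candidate that are \emph{sufficient} for validity, dual to conditions (\ref{li:a})--(\ref{li:d}) in the proof of Lemma~\ref{lem:forwardswitchings}: (a) no deleted edge lies in a short cycle; (b) any two cycle vertices at cyclic distance $\ell$ along $\alpha$ are at $G$-distance at least $r-\ell+1$; (c) $d_G(w_i,u_{i+1})\ge r$ for every $i$; and (d) the vertices $u_i,w_i$ keep distance at least $r/2$ from each other, from the $v_j$, and from every short cycle, apart from the forced structure $u_i\sim v_i\sim w_i$. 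Condition (a) already prevents any short cycle from being destroyed (note $\alpha\not\subset G$ once (i) fails), since the only edges removed by the switching are the deleted ones. A cycle $\beta\ne\alpha$ present in the switched graph but not in $G$ is a union of arcs of $\alpha$, the new ``cross'' edges $w_iu_{i+1}$, and paths surviving from $G$ --- the edges at a vertex change only at the $v_i,u_i,w_i$ --- and dissecting $\beta$ into these pieces as in Lemma~\ref{lem:forwardswitchings}, run in reverse, shows that a short such $\beta$ forces one of (a)--(d) to fail, its $\alpha$-arcs consuming cyclic-distance budget while each surviving path is at least as long as the pertinent threshold.

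It then remains to estimate the expected number of candidates violating each condition. A condition depending only on $G$ is violated by $\bigl(d(d-1)\bigr)^k$ times the probability of the corresponding event, and Proposition~\ref{prop:McKayestimate} bounds these; for instance $\P[d_G(v_a,v_b)<r-\ell+1]$ is at most a constant times $\sum_{m\le r-\ell}(d-1)^m/n=O\bigl((d-1)^{r-\ell}/n\bigr)$. A condition referring only to the endpoint vertices indexed by a set $S$ is handled via the identity
\begin{align*}
  \E\bigl[\#\{\text{candidates with property }P\}\bigr]
  =\E\Bigl[\Bigl(\textstyle\prod_{j\notin S}d(d-1)\Bigr)\cdot
    \#\{\text{partial choices on }S\text{ with }P\}\Bigr],
\end{align*}
valid because the number of cherries at each $v_j$ is the deterministic quantity $d(d-1)$; this reduces the bound to counting short walks between fixed cycle vertices, again controlled by Proposition~\ref{prop:McKayestimate}. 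The dominant contribution turns out to be from the $2k$ events of type (a): each is at most a constant times $\bigl(d(d-1)\bigr)^{k-1}(d-1)\,\E\bigl[\#\{\text{short cycles through a fixed vertex}\}\bigr]$, and the expectation here is $O\bigl((d-1)^r/n\bigr)$, giving total $O\bigl(k(d-1)^{r-1}n^{-1}\bigr)$ relative to $\bigl(d(d-1)\bigr)^k$. Every other event contributes a term of that size or smaller; in particular grading the separation (b) by the cyclic distance $\ell$ keeps its total at $O(k)$, rather than $O(k^2)$, copies of $(d-1)^{r-1}/n$ --- there are only $k$ pairs at each cyclic distance and the per-pair bound $O\bigl((d-1)^{r-\ell}/n\bigr)$ decays geometrically in $\ell$ --- while the $O(k^2)$ coincidence events from (ii) each cost only $O\bigl((d(d-1))^k/n\bigr)$ and are absorbed since $k\le r\le(d-1)^{r-1}$ when $d\ge3$.

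The main obstacle is the verification in the second paragraph that conditions (a)--(d) really do suffice for validity: this is a careful case analysis of how a backward switching can create a short cycle, dualizing the already delicate argument for Lemma~\ref{lem:forwardswitchings}, and the conditions must be chosen tightly enough --- in particular with the $\ell$-graded separation (b) --- that the accumulated error carries only a single power of $k$.
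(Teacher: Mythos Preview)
Your approach is sound but genuinely different from the paper's, and it is worth seeing why the paper's route is lighter.

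You fix the cycle $\alpha$ and bound $\E N$ directly by summing, over a list of bad events, the expected number of candidates falling into each; this forces you to control quantities such as $\P[d_G(v_a,v_b)<r-\ell+1]$ for \emph{fixed} vertices $v_a,v_b$, and more generally the expected number of short walks between prescribed neighbours of prescribed vertices.  The paper sidesteps all of this with a symmetry trick: it sets $B=\sum_\beta B_\beta$, summing over \emph{all} $k$-cycles $\beta$ in $K_n$, gives a deterministic lower bound for $B$ in terms of the cycle counts $C_j$ (exactly parallel to Lemma~\ref{lem:forwardswitchings}), takes expectations using only the crude estimate $\E C_j\le \Cr{50}(d-1)^j/2j$, and then observes that by exchangeability of the vertex labels $\E B_\alpha=(2k/[n]_k)\,\E B$.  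This last step is the key idea you are missing; it means the only probabilistic input needed is Proposition~\ref{prop:McKayestimate}\ref{item:onecycle}, with no path-between-fixed-vertices estimates at all.

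Two places where your write-up would need shoring up if you pursued it as stated: first, Proposition~\ref{prop:McKayestimate} as recorded in the paper covers only cycles and cycles-joined-by-a-path, not the path probabilities you invoke, so you would have to go back to \cite[Theorem~3a]{MWW} directly; second, your sufficient conditions (a)--(d) differ from the paper's pair (\ref{li:2a})--(\ref{li:2b}), and the claim that they preclude creation of any short cycle other than $\alpha$ is asserted rather than verified --- in particular your condition (d) bundles several separation requirements together and you would need to check that, for instance, a created cycle using a long arc of $\alpha$ together with a single surviving $G$-path through some $v_j$ (which remains degree-$d$ in $G'$ via its non-$u_j$, non-$w_j$ neighbours) is really excluded.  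None of this is fatal, but the paper's symmetrisation avoids the whole issue.
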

\begin{proof}
  The question this time is given $\alpha$,
  how many choices of oriented paths yield a valid
  switching?  For any fixed $\alpha$,
  there are at most $(d(d-1))^k$ choices of oriented paths, proving
  \eqref{eq:bup}.
  For the lower bound, let $B=\sum_{\beta}B_\beta$, where $\beta$
  runs over all cycles of length $k$ in the complete graph.  
  We will first show
  that
  \begin{align}
    B\geq \frac{[n]_k\bigl(d(d-1)\bigr)^k}{2k}\left(
      1 - \frac{4k\sum_{j=3}^rj\Cy{j} + O\bigl(
        k(d-1)^{r}\bigr)}{nd}\right).\label{eq:bigb}
  \end{align}
  As in Lemma~\ref{lem:forwardswitchings}, we give
  conditions that ensure
  a valid switching.  Let  $\beta=v_0\cdots v_{k-1}$, and suppose
  that the paths $u_iv_iw_i$ in $G$
  for $0\leq i\leq k-1$
  satisfy
  \begin{enumerate}[(a)]
    \item the edges $v_iu_i$ and $v_iw_i$ are not contained in any short cycles;
      \label{li:2a}
    \item for all $1\leq j\leq r/2$,
      the distance between the paths $u_iv_iw_i$ and $u_{i+j}v_{i+j}w_{i+j}$
      is at least $r-j+1$.\label{li:2b}
  \end{enumerate}
  Any choice of edges satisfying these conditions gives a valid
  backwards switching:
  Condition (\ref{li:2b}) ensures that $v_i\not\sim v_{i+1}$ and
  $w_i\not\sim u_{i+1}$, as required in the definition of a switching.
  Let $G'$ be the graph obtained by applying the switching.
  We need to check that no short cycles besides $\beta$ 
  are created or destroyed 
  by the switching.  By (\ref{li:2a}), none are destroyed.
  Suppose a short cycle $\beta'$ other than $\beta$ is created in $G'$.
  It consists of paths in $G\cap G'$, portions of $\beta$, 
  and edges $w_iu_{i+1}$.
  Any such path in $G\cap G'$ must have length at least $r/2$
  because
  \begin{itemize}
    \item if it starts at $u_i$, $v_i$, or $w_i$ and ends at
      $u_{i+j}$, $v_{i+j}$, or $w_{i+j}$ for $1\leq j\leq r/2$, then
      (\ref{li:2b}) implies this;
    \item if it starts and ends at one of $u_i$, $v_i$, and $w_i$, then
      (\ref{li:2a}) implies this.
  \end{itemize}
  Hence $\beta'$ must contain exactly one such path.
  The remainder of $\beta'$ must either be an edge $w_iu_{i+1}$,
  or a portion of $\beta'$, both of which are impossible by (\ref{li:2b}).
  
  There are $[n]_kd^k/2k$ choices for $\beta$, and at most
  $(d(d-1))^k$ choices for $u_i, w_i$, $0\leq i<k$.
  As before, we count how many of these potential switchings
  satisfy conditions (\ref{li:2a}) and (\ref{li:2b}) to get a lower
  bound on $B$.
  By similar arguments as in the proof of Lemma~\ref{lem:forwardswitchings}, 
  we find
  that at most
  \begin{align*}
    2[n-1]_{k-1}\big(d(d-1)\big)^{k-1}(d-1)\sum_{j=3}^rjC_j
  \end{align*}
  of the switchings violate condition (\ref{li:2a}), and
  at most $[n]_{k-1}\big(d(d-1)\big)^{k-1}
  O\big((d-1)^{r+1}\big)$ violate condition (\ref{li:2b}), which
  proves \eqref{eq:bigb}.
  
  By Proposition~\ref{prop:McKayestimate}\ref{item:onecycle}
  (or by \cite[eq.~2.2]{MWW}),
  \begin{align*}
    \E \Cy{k} &\leq \frac{\Cr{generalsubgraph}(d-1)^k}{2k}.
  \end{align*}
  Applying this to \eqref{eq:bigb} gives
  \begin{align*}
    \E B \geq \frac{[n]_k\bigl(d(d-1)\bigr)^k}{2k}
    \left( 1-O\left(\frac{k(d-1)^{r-1}}{n}\right)\right)
  \end{align*}
  
    By the exchangeability of the vertex labels of $G$, the law of
  $B_\beta$ is the same for all $k$-cycles $\beta$.
  It follows that $\E B = ([n]_k/2k)\E B_\alpha$, 
  proving \eqref{eq:blp}.
\end{proof}

\subsection{Applying Stein's method}
\label{subsec:stein2}
Rather than prove a theorem about the vector of cycle counts, we will give
a result on a more general process.
  Let $\Ii$ be an index set of possible
cycles in $K_n$
that the random graph $G$ might contain, and for $\alpha\in\Ii$,
let $I_\alpha$ be an indicator on $G$ containing $\alpha$.
We will show that the entire process
$(I_\alpha,\,\alpha\in\Ii)$ is well approximated by a vector
of independent Poissons, with the accuracy of the approximation
depending on the size of the set $\Ii$.  We will also prove a slight
variant in Proposition~\ref{prop:genpoiapprox} which achieves a better
error bound, at the expense of considering a less general process.
Our result on cycle counts, 
Theorem~\ref{thm:bestpoiapprox}, will follow easily from this.

Though we have no need for these process approximations in our paper,
similar results for the permutation model of random
graph have proven useful in \cite{JP}.  In any event, the machinery of
Stein's method gives them to us with no extra effort.

\begin{thm}\label{thm:genpoiapprox}
  Let $G$ be a random $d$-regular graph on $n$ vertices.
  For some collection $\Ii$ of cycles in the complete graph $K_n$
  of maximum length $r$,
  we define $\I=(I_\alpha,\,\alpha\in\Ii)$, with $I_\alpha=\1\{\text{$G$
  contains $\alpha$}\}$.  Let $\Z=(Z_\alpha,\,\alpha\in\Ii)$
  be a vector of independent Poisson random variables, with $\E Z_\alpha
  =(d-1)^{\abs{\alpha}}/[n]_{\abs{\alpha}}$, where $\abs{\alpha}$ denotes
  the length of the cycle $\alpha$.
  
  For some absolute constant $\Cr{52}$, for all $n$
  and $d,r\geq 3$ satisfying $r\leq n^{1/10}$ and $d\leq n^{1/3}$,
  \begin{align*}
    \dtv(\I,\,\Z) &\leq\sum_{\alpha\in\Ii}\frac{\Cl{52}\abs{\alpha}
    (d-1)^{\abs{\alpha}+r-1}}{n^{\abs{\alpha}+1}}.
  \end{align*}
\end{thm}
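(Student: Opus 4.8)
The plan is to obtain the bound from Proposition~\ref{prop:steinsmethod}, applied with $W=\I$, the $\sigma$-algebra $\Ff=\sigma(G)$, and a partner $\I'$ produced by a single random switching of $G$. First I construct the exchangeable pair $(G,G')$. Given $G$, choose a cycle $\alpha\in\Ii$ uniformly at random, and then an index $J$ uniformly from $\{1,\dots,M_\alpha\}$, where $M_\alpha:=[n]_{|\alpha|}d^{|\alpha|}$. If $\alpha\subset G$, enumerate the valid forward $\alpha$-switchings as $1,\dots,F_\alpha$ (there are at most $M_\alpha$ of them by \eqref{eq:fub}), set $G'$ equal to the result of the $J$-th one if $J\le F_\alpha$, and $G'=G$ otherwise. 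If $\alpha\not\subset G$, do likewise with the valid backward $\alpha$-switchings $1,\dots,B_\alpha$, which number at most $(d(d-1))^{|\alpha|}\le M_\alpha$ by \eqref{eq:bup}. Set $\I'=(\1\{\beta\subset G'\})_{\beta\in\Ii}$. Since for each $\alpha$ the valid forward $\alpha$-switchings from $g$ to $g'$ biject with the valid backward $\alpha$-switchings from $g'$ to $g$, and $g,g'$ use the same slot count $M_\alpha$, the pair $(G,G')$ --- hence $(\I,\I')$ --- is exchangeable. Since a valid $\alpha$-switching creates or destroys no short cycle other than $\alpha$, and every cycle of $\Ii$ is short, $\I$ and $\I'$ can differ only in the $\alpha$-coordinate; thus in Proposition~\ref{prop:steinsmethod} we have $\Delta^+_\alpha=\{I'_\alpha=I_\alpha+1\}$ and $\Delta^-_\alpha=\{I'_\alpha=I_\alpha-1\}$, the former forcing $\alpha\not\subset G$ and the latter $\alpha\subset G$.

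Next I compute the conditional probabilities. Conditioning on $G$,
\begin{align*}
  \P[\Delta^+_\alpha\mid G]=\frac{\1\{\alpha\not\subset G\}\,B_\alpha}{|\Ii|\,M_\alpha},\qquad
  \P[\Delta^-_\alpha\mid G]=\frac{\1\{\alpha\subset G\}\,F_\alpha}{|\Ii|\,M_\alpha}.
\end{align*}
The natural --- and essentially forced --- choice of constants is $c_\alpha=|\Ii|$ for every $\alpha$, because typically $F_\alpha\approx M_\alpha$ (Lemma~\ref{lem:forwardswitchings}) and $B_\alpha/M_\alpha\approx(d-1)^{|\alpha|}/[n]_{|\alpha|}=\E Z_\alpha$ (Lemma~\ref{lem:backwardsswitchings}). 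With this choice, and using the deterministic bounds $F_\alpha\le M_\alpha$ and $B_\alpha\le(d(d-1))^{|\alpha|}$,
\begin{align*}
  \big|\,I_\alpha-c_\alpha\P[\Delta^-_\alpha\mid G]\,\big|&=\1\{\alpha\subset G\}\Big(1-\tfrac{F_\alpha}{M_\alpha}\Big),\\
  \big|\,\E Z_\alpha-c_\alpha\P[\Delta^+_\alpha\mid G]\,\big|&=\E Z_\alpha\Big(1-\1\{\alpha\not\subset G\}\tfrac{B_\alpha}{(d(d-1))^{|\alpha|}}\Big),
\end{align*}
both nonnegative; and $\xi_\alpha=\min(1,1.4(\E Z_\alpha)^{-1/2})=1$ since $\E Z_\alpha\le1$. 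So it remains to bound the expectations of these two discrepancies and sum over $\alpha\in\Ii$.

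For the $\Delta^+$ term, a backward $\alpha$-switching requires every edge of $\alpha$ to be absent, so $B_\alpha=0$ whenever $\alpha\subset G$; hence $\E[\1\{\alpha\not\subset G\}B_\alpha]=\E B_\alpha$, and \eqref{eq:blp} gives $\E\big|\E Z_\alpha-c_\alpha\P[\Delta^+_\alpha\mid G]\big|\le\E Z_\alpha\cdot\Cr{bs}|\alpha|(d-1)^{r-1}/n$, which has the claimed order after using $[n]_{|\alpha|}\ge n^{|\alpha|}/2$. For the $\Delta^-$ term I split on whether $\alpha$ shares an edge with another short cycle of $G$. On that event I use $1-F_\alpha/M_\alpha\le1$ and estimate the event's probability by $\sum_\beta\P[\alpha\cup\beta\subset G]$ over short cycles $\beta\ne\alpha$ meeting $\alpha$, via Proposition~\ref{prop:McKayestimate}\ref{item:twocycles} together with the count $|\alpha|\,n^{|\beta|-f-1}$ of length-$|\beta|$ cycles sharing an $f$-edge arc with $\alpha$; the resulting double sum over $|\beta|\le r$ and $f\ge1$ is geometric and collapses to $O\big(|\alpha|(d-1)^{|\alpha|+r-1}/n^{|\alpha|+1}\big)$. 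Off that event, \eqref{eq:flb} bounds $1-F_\alpha/M_\alpha$ by $\big(2|\alpha|\sum_{j=3}^r j\Cy{j}+\Cr{4}|\alpha|(d-1)^r\big)/(nd)$; multiplying by $\1\{\alpha\subset G\}$, taking expectations and expanding $\Cy{j}=\sum_{|\beta|=j}\1\{\beta\subset G\}$ reduces matters to $\P[\alpha\subset G]$ and to sums $\sum_{|\beta|=j}\P[\alpha\cup\beta\subset G]$, controlled by Proposition~\ref{prop:McKayestimate}\ref{item:onecycle} and \ref{item:twocycles}. The dominant contribution comes from $\beta$ edge-disjoint from $\alpha$, where the factor $j$ in $j\Cy{j}$ cancels against the $\tfrac{1}{2j}$ in the number of $j$-cycles, so no factor of $r$ survives; the remaining contributions (including the ``$\beta$ meets $\alpha$'' one, now carrying an extra $j$) are lower order since $|\alpha|\le r\le n^{1/10}$ and $d\le n^{1/3}$. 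Collecting the pieces and summing over $\alpha\in\Ii$ yields the theorem for a suitable $\Cr{52}$.

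I expect the main obstacle to be two-fold. Conceptually it is setting up the exchangeable pair correctly: padding the forward and backward switchings out of each graph up to the common slot count $M_\alpha$ is precisely what makes $(\I,\I')$ exchangeable while keeping $\P[\Delta^\pm_\alpha\mid G]$ proportional to $F_\alpha$ and $B_\alpha$, so that Lemmas~\ref{lem:forwardswitchings} and~\ref{lem:backwardsswitchings} feed directly into Stein's inequality. Computationally the delicate point is the term $\E[\1\{\alpha\subset G\}\sum_{j=3}^r j\Cy{j}]$: the dependence on $|\alpha|$ must stay linear rather than picking up a spurious factor of $r$ (hence the need to pair $j$ with the $\tfrac{1}{2j}$ from counting $j$-cycles), and the ``$\alpha$ meets another short cycle'' event must be controlled at rate $n^{-(|\alpha|+1)}$ rather than the cruder $n^{-|\alpha|}$ --- this sharpening is what produces the exponent $|\alpha|+1$ in the denominator of the stated bound.
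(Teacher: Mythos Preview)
Your proposal is correct and follows essentially the paper's route: build an exchangeable pair by a random valid switching and feed the resulting $F_\alpha/M_\alpha$ and $B_\alpha/M_\alpha$ into Proposition~\ref{prop:steinsmethod} with $\Ff=\sigma(G)$. The paper's exchangeable pair is phrased as one step of a random walk on a weighted graph of $d$-regular graphs (made regular by self-loops, with maximum degree $d_0$) rather than your ``pad out to $M_\alpha$ slots'' construction, but the two devices are equivalent and produce identical Stein quantities $c_\alpha\P[\Delta^\pm_\alpha\mid G]$.

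One point to tighten in your sketch of the $\P[A_3]$ bound: short cycles $\beta$ sharing an edge with $\alpha$ need not meet $\alpha$ in a \emph{single} arc; the intersection can have $p\ge 2$ components, and your count $|\alpha|\,n^{|\beta|-f-1}$ only covers $p=1$. The paper classifies $\beta$ by both $p$ and the total number $f$ of shared edges, bounding the number of such $\beta$ by $(2r^3)^{p-1}/(p-1)!^2\cdot 2k\,n^{j-p-f}$; summing over $p\ge 2$ contributes an extra factor of order $r^3/n\le n^{-7/10}$ relative to $p=1$, so the $p=1$ term you wrote down does dominate and the final bound is unchanged. Everything else---the split into $A_1,A_2,A_3$, the use of Lemmas~\ref{lem:forwardswitchings} and~\ref{lem:backwardsswitchings}, and the cancellation of $j$ against $1/(2j)$ in the $\sum_j j\Cy{j}$ term---matches the paper.
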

Before we give the proof, we show the result of applying this theorem
when $\Ii$ is all cycles of length $r$ or less:
\begin{cor}
  Let $G$ be a random $d$-regular graph on $n$ vertices,
  and let $\Ii$ be the collection of all cycles of length $r$
  or less in the complete graph $K_n$.  Define $\I$ and $\Z$ 
  as in the previous theorem.
  For some absolute constant $\Cr{53}$, for all $n$
  and $d,r\geq 3$,
  \begin{align*}
    \dtv(\I,\,\Z) &\leq
      \frac{\Cl{53}(d-1)^{2r-1}}{n}.
  \end{align*}
\end{cor}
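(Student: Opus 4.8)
The plan is to feed the conclusion of Theorem~\ref{thm:genpoiapprox} into a crude sum over all short cycles. The only point that needs attention is that the corollary is claimed for every $d,r\geq 3$ and every $n$, while Theorem~\ref{thm:genpoiapprox} carries the hypotheses $r\leq n^{1/10}$ and $d\leq n^{1/3}$. I would therefore split into the main range, where the theorem applies directly, and a degenerate range, where the asserted bound is trivially true.

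In the main range I would group the cycles in $\Ii$ by length. There are exactly $[n]_k/(2k)$ cycles of length $k$ in $K_n$ (choose an ordered tuple of $k$ distinct vertices and identify the $2k$ tuples tracing out the same cycle), so Theorem~\ref{thm:genpoiapprox} gives
\begin{align*}
  \dtv(\I,\Z) &\leq \sum_{k=3}^{r}\frac{[n]_k}{2k}\cdot\frac{\Cr{52}\,k\,(d-1)^{k+r-1}}{n^{k+1}}
  \leq \sum_{k=3}^{r}\frac{\Cr{52}(d-1)^{k+r-1}}{2n},
\end{align*}
where I used $[n]_k\leq n^k$. Since $d\geq 3$, the geometric sum satisfies $\sum_{k=3}^{r}(d-1)^k\leq \frac{d-1}{d-2}(d-1)^r\leq 2(d-1)^r$, so the right-hand side is at most $\frac{\Cr{52}(d-1)^{r-1}}{2n}\cdot 2(d-1)^r=\frac{\Cr{52}(d-1)^{2r-1}}{n}$, which is the claimed bound with $\Cr{53}=\Cr{52}$.

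For the degenerate range, suppose $r>n^{1/10}$ or $d>n^{1/3}$. Because $2r-1\geq 5$, in the second case $(d-1)^{2r-1}\geq(d-1)^5$ is of order $n^{5/3}$ up to constants, and in the first it is at least $2^{2n^{1/10}-1}$; in either case $(d-1)^{2r-1}/n\geq 1$ once $n$ exceeds an absolute threshold $n_0$, and since $\dtv(\I,\Z)\leq 1$ always, the bound holds there. For the finitely many remaining pairs with $n\leq n_0$ the bound is again immediate from $\dtv(\I,\Z)\leq 1$ together with $(d-1)^{2r-1}\geq 1$, after enlarging the constant, so one may take $\Cr{53}=\max(\Cr{52},n_0)$. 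There is no genuine obstacle in this argument; the only mildly fussy point is making the ``right-hand side $\geq 1$'' comparison honest for small $n$, and that is entirely absorbed into the freedom in $\Cr{53}$.
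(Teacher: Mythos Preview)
Your proof is correct and follows essentially the same approach as the paper's: split off the degenerate range $r>n^{1/10}$ or $d>n^{1/3}$ where the bound is trivially at least $1$, and in the main range group the cycles in $\Ii$ by length and sum the per-cycle bound from Theorem~\ref{thm:genpoiapprox}. Your treatment is merely more explicit about the geometric sum and the small-$n$ bookkeeping than the paper's $O(\cdot)$ argument.
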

\begin{proof}[Proof of the corollary]
  If $r>n^{1/10}$ or $d>n^{1/3}$, then 
  $\Cr{53}(d-1)^{2r-1}/n>1$ for a sufficiently large choice of
  $\Cr{53}$, and the total variation bound is trivial.
  Thus we can assume that this is not the case and apply
  the previous theorem:
  \begin{align*}
    \dtv(\I,\,\Z) &\leq \sum_{\alpha\in\Ii}\frac{\Cr{52}\abs{\alpha}
    (d-1)^{\abs{\alpha}+r-1}}{n^{\abs{\alpha}+1}}\\
      &= \sum_{k=3}^r\frac{[n]_k}{2k}\Bigl(\frac{\Cr{52}k
    (d-1)^{k+r-1}}{n^{k+1}}\Bigr)\\
    &=O\Bigl(\frac{(d-1)^{2r-1}}{n}\Bigr).\qedhere
  \end{align*}
\end{proof}
The strength of Theorem~\ref{thm:genpoiapprox} is that one can consider
a smaller set $\Ii$ of possible cycles and get a tighter total variation
bound.  For instance, if $\Ii$ is the set of all cycles in $K_n$ of length
$r$ or less containing vertex $1$, then $\I$ and $\Z$ are within
$O\big(r(d-1)^{2r-1}/n^2\big)$ in total variation norm.
\begin{rmk}\label{rmk:functional}
Since the cycle counts $(\Cy{3},\ldots,\Cy{r})$ are a functional
of $\I$, this corollary implies that
\begin{align*}
  \dtv\big((\Cy{3},\ldots,\Cy{r}),\,(Z_3,\ldots,Z_r)\big)
    &\leq \frac{\Cr{53}(d-1)^{2r-1}}{n},
\end{align*}
where $(Z_3,\ldots,Z_r)$ is a vector of independent Poisson random
variables with $\E Z_k = (d-1)^k/2k$.  In fact, we will give a slightly
better result in Theorem~\ref{thm:bestpoiapprox}.
\end{rmk}
\begin{proof}[Proof of Theorem~\ref{thm:genpoiapprox}]
  We will construct an exchangeable pair by taking a step in a reversible
  Markov chain.  To make this chain, define a graph $\GGG$
  whose vertices consist of all $d$-regular graphs on $n$
  vertices.
  For every valid forward $\alpha$-switching with $\alpha\in\Ii$ 
  from a graph
  $G_0$ to $G_1$, make an undirected edge in $\GGG$
  between $G_0$ and $G_1$. Place a weight of 
  $1/[n]_{\abs{\alpha}}d^{\abs{\alpha}}$ on each of these edges.
  The essential fact that will make our
  arguments work is that valid forward $\alpha$-switchings from
  $G_0$ to $G_1$
  are in bijective correspondence with valid backwards $\alpha$-switchings from
  $G_1$ to $G_0$.  Thus, we could have equivalently defined $\GGG$ 
  by forming an edge for every valid backwards switching.

  Define the degree of a vertex in a graph with weighted edges to be the
  sum of the adjacent edge weights.
  Let $d_0$ be the maximum degree of $\GGG$ as defined so far.
  To make $\GGG$ regular, add a weighted loop to each vertex that brings
  its degree up to $d_0$.
  Now, consider a random walk on $\GGG$ that moves with probability
  proportional to the edge weights.  This random walk is a Markov chain
  reversible with respect to the uniform distribution on $d$-regular
  graphs on $n$ vertices.  Thus, if $G$ has this distribution, and we
  obtain $G'$ by advancing one step in the random walk, 
  the pair of graphs $(G,G')$ is exchangeable.
  
  Let $I'_\alpha$ be an indicator on $G'$ containing
  the cycle $\alpha$, and define $\I'=(I'_\alpha,\,\alpha\in\Ii)$.
  It follows from the exchangeability of $G$ and $G'$ that $\I$ and
  $\I'$ are exchangeable, and we can apply
  Proposition~\ref{prop:steinsmethod} on this pair.  Define the events 
  $\Delta_\alpha^+$ and $\Delta_\alpha^-$
  as in that proposition.  By our construction,
  \begin{align*}
    \P[\Delta_\alpha^+\mid G]=\frac{B_\alpha}{d_0[n]_{\abs{\alpha}}
    d^{\abs{\alpha}}},\qquad
    \P[\Delta_\alpha^-\mid G]=\frac{F_\alpha}{d_0[n]_{\abs{\alpha}}
    d^{\abs{\alpha}}}.
  \end{align*}
  Thus by Proposition~\ref{prop:steinsmethod} with 
  all constants set to $d_0$,
  \begin{align}
    \dtv(\I,\,\Z) &\leq \sum_{\alpha\in\Ii}
       \E\left\lvert\frac{(d-1)^{\abs{\alpha}}}{[n]_{\abs{\alpha}}}
         -\frac{B_\alpha}{[n]_{\abs{\alpha}}d^{\abs{\alpha}}}\right\rvert
         +\sum_{\alpha\in\Ii}  
        \E\left\lvert I_\alpha
         -\frac{F_\alpha}{[n]_{\abs{\alpha}}d^{\abs{\alpha}}}\right\rvert.
       \label{eq:dtvbound}
  \end{align}
  We will bound these two sums.  Fix some $\alpha\in\Ii$, and let 
  $\abs{\alpha}=k$.  Applying first the upper bound and then
  the lower bound from Lemma~\ref{lem:backwardsswitchings},
  \begin{align}
    \E\left\lvert\frac{(d-1)^{k}}{[n]_{k}}
         -\frac{B_\alpha}{[n]_{k}d^{k}}\right\rvert
       &=\E\left[\frac{(d-1)^{k}}{[n]_{k}}
         -\frac{B_\alpha}{[n]_{k}d^{k}}\right]\nonumber\\
       &\leq \frac{\Cr{bs}k(d-1)^{k+r-1}}{n[n]_{k}}.\label{eq:E1}
  \end{align}
          
  To bound the other sum,  partition the state space of random regular
  graphs into three events:
  \begin{align*}
    A_1&=\{\text{$G$ does not contain $\alpha$}\},\\
    A_2&=\{\text{$G$ contains $\alpha$, which does not share
      an edge with another short cycle in $G$}\},\\
    A_3&=\{\text{$G$ contains $\alpha$, which shares an edge
    with another short cycle in $G$}\}.
  \end{align*}
  On $A_1$, we have $I_\alpha=F_\alpha=0$.  On $A_2$, both bounds
  from Lemma~\ref{lem:forwardswitchings} apply,
  giving us
  \begin{align*}
    \left\lvert I_\alpha
         -\frac{F_\alpha}{[n]_{k}d^{k}}\right\rvert
      &\leq \frac{2k\sum_{j=3}^rjC_j+\Cr{4}k(d-1)^r}{nd}.
  \end{align*}
  On $A_3$, we have $I_\alpha=1$ and $F_\alpha=0$.
  In all,
  \begin{align*}
    \E\left\lvert I_\alpha
         -\frac{F_\alpha}{[n]_{k}d^{k}}\right\rvert
       &\leq \E\left[\1_{A_2}\frac{2k\sum_{j=3}^rjC_j+\Cr{4}k(d-1)^r}{nd}
       +\1_{A_3}\right]\\
       &=\frac{2k}{nd}\E\biggl[\1_{A_2}\sum_{j=3}^rjC_j\biggr] 
          + \frac{\Cr{4}k(d-1)^r}{nd}
       \P[A_2] + \P[A_3].
  \end{align*}
    Let $\Jj$ be the set of all cycles of length $r$ or less in $K_n$
  that share no edges with $\alpha$.  On the event $A_2$, the graph $G$
  contains no cycles outside of this set (except for $\alpha$), and
  $\sum_{j=3}^r jC_j=k+\sum_{\beta\in\Jj}\abs{\beta}I_\beta$.
  Thus
  \begin{align}
    \E\left\lvert I_\alpha
         -\frac{F_\alpha}{[n]_{k}d^{k}}\right\rvert&\leq
         \frac{2k^2}{nd}\E\1_{A_2}+\frac{2k}{nd}\sum_{\beta\in\Jj}
         \abs{\beta}\E\1_{A_2}I_\beta+\frac{\Cr{4}k(d-1)^r}{nd}
       \P[A_2] + \P[A_3]\nonumber\\
       &\leq \frac{2k^2}{nd}\E{I_{\alpha}}+\frac{2k}{nd}\sum_{\beta\in\Jj}
         \abs{\beta}\E I_{\alpha}I_\beta+\frac{\Cr{4}k(d-1)^r}{nd}
       \E I_\alpha + \P[A_3].\label{eq:sums}
  \end{align}
  By Proposition~\ref{prop:McKayestimate}\ref{item:onecycle},
  \begin{align}
    \frac{2k^2}{nd}\E I_\alpha&=O\Bigl(\frac{k^2(d-1)^{k-1}}{n^{k+1}}\Bigr)
      \label{eq:part1}\\
    \intertext{and}
    \frac{\Cr{4}k(d-1)^r}{nd}
       \E I_\alpha&=O\Bigl(\frac{k(d-1)^{k+r-1}}{n^{k+1}}\Bigr)\label{eq:part3}.
  \end{align}
  By Proposition~\ref{prop:McKayestimate}\ref{item:twocycles} with $f=0$,
  for any $\beta\in\Jj$ we have $\E I_\alpha I_\beta\leq \Cr{generalsubgraph}(d-1)^{j+k}
    /n^{j+k}$, where $j=\abs{\beta}$.  For each $3\leq j\leq r$, there are at most $[n]_j/2j$ cycles
  in $\Jj$ of length $j$.  Therefore
  \begin{align}
    \frac{2k}{nd}\sum_{\beta\in\Jj}
         \abs{\beta}\E I_{\alpha}I_\beta
      &\leq \frac{2k}{nd}\sum_{j=3}^r\frac{[n]_j}{2j} \Bigl(\frac{j
      \Cr{generalsubgraph}(d-1)^{j+k}}{n^{j+k}}\Bigr)\nonumber\\
      &\leq \frac{k}{nd}\sum_{j=3}^r \frac{
      \Cr{generalsubgraph}(d-1)^{j+k}}{n^{k}}
      = O\Bigl(\frac{k(d-1)^{k+r-1}}{n^{k+1}}\Bigr).\label{eq:part2}
  \end{align}

  The last term of \eqref{eq:sums} is the most difficult to bound.
  Let $\Kk$ be the set of short cycles in $K_n$  that
  share an edge with $\alpha$, not including $\alpha$ itself.  
  By a union bound,
  \begin{align}
    \P[A_3]&\leq \sum_{\beta\in\Kk}\E I_\alpha I_\beta.\label{eq:A3ub}
  \end{align}
  Now, we classify and count the cycles $\beta\in\Kk$ according to 
  the structure of $\alpha\cup\beta$.
  Suppose that $\beta$ has length $j$, and consider
  the intersection of $\alpha$ and $\beta$ (the graph consisting
  of all vertices and edges contained in both $\alpha$ and $\beta$).
  Suppose this intersection graph has $p$ components and $f$ edges.
  As computed on \cite[p.~5]{MWW}, the number of possible isomorphism
  types of $\alpha\cup\beta$ given $p$ and $f$ is at most 
  $(2r^3)^{p-1}/(p-1)!^2$.  
  For each possible isomorphism type of $\alpha\cup\beta$, 
  there are no more than $2kn^{j-p-f}$ possible choices of $\beta$
  such that $\alpha\cup\beta$ falls into this isomorphism class.
  This is because $\alpha\cup\beta$ has $j+k-p-f$
  vertices, $k$ of which are determined by $\alpha$. In defining $\beta$,
  the remaining $j-p-f$ vertices can be chosen to be anything, and the
  intersection of $\alpha$ and $\beta$ can be rotated around $\alpha$ in
  $2k$ ways, all without changing the isomorphism class of $\alpha\cup\beta$.
  In all, we have shown that the number of $j$-cycles whose overlap with
  $\alpha$ has $p$ components and $f$ edges is at most
  \begin{align*}
    \frac{(2r^3)^{p-1}}{(p-1)!^2}
      2kn^{j-p-f}.
  \end{align*}
  
  For any such choice of $\beta$, we have
  $\E I_\alpha I_\beta
  \leq \Cr{generalsubgraph}(d-1)^{j+k-f}/n^{j+k-f}$ 
  by 
  Proposition~\ref{prop:McKayestimate}\ref{item:twocycles}.
  Applying this to \eqref{eq:A3ub},
  \begin{align}
    \P[A_3]&\leq\sum_{j=3}^r\sum_{p,f\geq 1} \frac{(2r^3)^{p-1}}{(p-1)!^2}
      2kn^{j-p-f}\frac{\Cr{generalsubgraph}(d-1)^{j+k-f}}{n^{j+k-f}}\nonumber\\
    &=\sum_{j=3}^r\sum_{p,f\geq 1} \frac{(2r^3)^{p-1}}{(p-1)!^2}
      \frac{2k\Cr{generalsubgraph}(d-1)^{j+k-f}}{n^{k+p}}\nonumber\\
    &=\sum_{j=3}^rO\Bigl(\frac{k(d-1)^{j+k-1}}{n^{k+1}}\Bigr)
    =O\Bigl(\frac{k(d-1)^{k+r-1}}{n^{k+1}}\Bigr).\label{eq:part4}
  \end{align}
  Combining \eqref{eq:part1}, \eqref{eq:part3}, \eqref{eq:part2}, and
  \eqref{eq:part4}, we have
  \begin{align*}
    \E\left\lvert I_\alpha
         -\frac{F_\alpha}{[n]_{k}d^{k}}\right\rvert&=
       O\Bigl(\frac{k(d-1)^{k+r-1}}{n^{k+1}}\Bigr).
  \end{align*}
  Applying this and \eqref{eq:E1} to \eqref{eq:dtvbound} establishes
  the theorem.
\end{proof}
As mentioned in Remark~\ref{rmk:functional}, we can apply this theorem
to give a total variation bound on the law of any functional
of $\I$.  This bound is often less than optimal, since this theorem
fails to exploit the $\lambda_k^{-1/2}$ factors in 
Proposition~\ref{prop:steinsmethod}.  We will take advantage
of these factors in the following proposition, and then apply this to
prove Theorem~\ref{thm:bestpoiapprox}.
\begin{prop}\label{prop:genpoiapprox}
  With the set-up of Theorem~\ref{thm:genpoiapprox}, divide up the
  collection of cycles $\Ii$ into bins $\Bb_1,\ldots,\Bb_s$. Let
  \begin{align*}
    I_k=\sum_{\alpha\in\Bb_k}I_\alpha, \qquad Z_k=\sum_{\alpha\in\Bb_k}Z_\alpha,
  \end{align*}
  and let $\lambda_k=\E Z_k$.
  Then
  \begin{align*}
    \dtv\big((I_1,\ldots,I_s),\,(Z_1,\ldots,Z_s)\big))
      &\leq \Cr{52}\sum_{k=1}^s\xi_k\sum_{\alpha\in\Bb_k}
      \frac{\abs{\alpha}
    (d-1)^{\abs{\alpha}+r-1}}{n^{\abs{\alpha}+1}},
  \end{align*}
  where $\xi_k=\min\big(1, 1.4\lambda_k^{-1/2}\big)$.
\end{prop}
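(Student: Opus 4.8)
The plan is to run the same exchangeable-pairs construction as in the proof of Theorem~\ref{thm:genpoiapprox}, but to apply Proposition~\ref{prop:steinsmethod} to the binned vector $(I_1,\ldots,I_s)$ rather than to the full process $(I_\alpha,\,\alpha\in\Ii)$. Concretely, I would form exactly the same weighted graph $\GGG$ on the set of $d$-regular graphs on $n$ vertices, with an edge of weight $1/[n]_{\abs{\alpha}}d^{\abs{\alpha}}$ for each valid forward $\alpha$-switching ($\alpha\in\Ii$), loops added to make it $d_0$-regular, and take $(G,G')$ to be one step of the reversible random walk started from stationarity. Then $(I_1,\ldots,I_s)$ and $(I_1',\ldots,I_s')$ are exchangeable, and I can invoke Proposition~\ref{prop:steinsmethod} with all constants $c_k$ set to $d_0$.

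The key point is identifying $\P[\Delta_k^+\mid G]$ and $\P[\Delta_k^-\mid G]$ for the binned events. Since a single switching step changes at most one coordinate $I_\alpha$ by exactly $\pm1$, the event $\Delta_k^+$ (that $I_k$ increases by one while all higher-indexed bins are unchanged) decomposes, up to the ordering subtlety in the definition of $\Delta_k^+$, as a disjoint union over $\alpha\in\Bb_k$ of the events that the step performs a switching creating $\alpha$ (a backwards $\alpha$-switching) without disturbing any cycle in $\Bb_{k+1}\cup\cdots\cup\Bb_s$. Because our switchings are \emph{valid}, i.e.\ they create or destroy no short cycle other than $\alpha$, this side condition is automatic, so
\begin{align*}
  d_0\P[\Delta_k^+\mid G]=\sum_{\alpha\in\Bb_k}\frac{B_\alpha}{[n]_{\abs{\alpha}}d^{\abs{\alpha}}},
  \qquad
  d_0\P[\Delta_k^-\mid G]=\sum_{\alpha\in\Bb_k}\frac{F_\alpha}{[n]_{\abs{\alpha}}d^{\abs{\alpha}}}.
\end{align*}
Plugging these into Proposition~\ref{prop:steinsmethod}, and using $\lambda_k=\sum_{\alpha\in\Bb_k}(d-1)^{\abs{\alpha}}/[n]_{\abs{\alpha}}$ together with linearity, the $k$-th summand of the Stein bound becomes
\begin{align*}
  \xi_k\Big(\E\big|\lambda_k-d_0\P[\Delta_k^+\mid\Ff]\big|+\E\big|I_k-d_0\P[\Delta_k^-\mid\Ff]\big|\Big)
  \leq \xi_k\sum_{\alpha\in\Bb_k}\Big(\E\big|\tfrac{(d-1)^{\abs{\alpha}}}{[n]_{\abs{\alpha}}}-\tfrac{B_\alpha}{[n]_{\abs{\alpha}}d^{\abs{\alpha}}}\big|+\E\big|I_\alpha-\tfrac{F_\alpha}{[n]_{\abs{\alpha}}d^{\abs{\alpha}}}\big|\Big),
\end{align*}
by the triangle inequality applied inside each expectation.

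From here the result is immediate: the per-$\alpha$ expectations appearing on the right are exactly the quantities bounded in the proof of Theorem~\ref{thm:genpoiapprox} — estimate \eqref{eq:E1} for the first and the chain \eqref{eq:part1}, \eqref{eq:part3}, \eqref{eq:part2}, \eqref{eq:part4} for the second — each of which is $O\big(\abs{\alpha}(d-1)^{\abs{\alpha}+r-1}/n^{\abs{\alpha}+1}\big)$. Summing over $\alpha\in\Bb_k$ with the weight $\xi_k$ and then over $k$ gives precisely the claimed bound with an absolute constant, which we may take to be the same $\Cr{52}$ as in Theorem~\ref{thm:genpoiapprox}. I expect the only real obstacle to be the bookkeeping around the ordering condition in the definition of $\Delta_k^\pm$ (the clause ``$W_j=W_j'$ for $k<j\leq r$''): one must check that validity of the switchings makes this condition vacuous, so that the probabilities split cleanly over bins as above. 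Everything else is a direct reuse of the estimates already established, so no new calculation is needed.
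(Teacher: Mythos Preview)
Your proposal is correct and follows essentially the same approach as the paper: construct the same exchangeable pair, identify $\P[\Delta_k^\pm\mid G]$ as the bin-wise sums of $B_\alpha$ or $F_\alpha$ terms (using validity of the switchings to handle the ordering clause in $\Delta_k^\pm$), apply Proposition~\ref{prop:steinsmethod} with the $\xi_k$ factors, split via the triangle inequality, and invoke the per-$\alpha$ estimates from the proof of Theorem~\ref{thm:genpoiapprox}. The paper's proof is terser but structurally identical; your discussion of why validity makes the ``$W_j=W_j'$ for $k<j$'' condition vacuous is exactly the point the paper leaves implicit.
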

\begin{proof}
  Define the exchangeable pair $(G,G')$ as in
  Theorem~\ref{thm:genpoiapprox}, and define
  $I'_1,\ldots,I'_s$ as the analogous quantities in
  $G'$.
  Define $\Delta_k^+$ and $\Delta_k^-$
  as in Proposition~\ref{prop:steinsmethod}, noting that
  \begin{align*}
    \P[\Delta_k^+\mid G] = \sum_{\alpha\in\Bb_k}\frac{B_\alpha}{d_0[n]_{\abs{\alpha}}
    d^{\abs{\alpha}}},\qquad\qquad
    \P[\Delta_k^-\mid G] = \sum_{\alpha\in\Bb_k}\frac{F_\alpha}{d_0[n]_{\abs{\alpha}}
    d^{\abs{\alpha}}}.
  \end{align*}
  By Proposition~\ref{prop:steinsmethod},
  \begin{align*}
    \dtv\big((I_1,\ldots&, I_s),\,(Z_1,\ldots,Z_s)\big)\\
    &\leq \sum_{k=1}^s
       \xi_k\left(\E\left\lvert\lambda_k-d_0\P[\Delta_k^+\mid G]\right\rvert+
         \E\left\lvert I_k-d_0\P[\Delta_k^-\mid G]\right\rvert\right)\\
    &=\sum_{k=1}^s
       \xi_k\E\Biggl\lvert\sum_{\alpha\in\Bb_k}\left(
       \frac{(d-1)^{\abs{\alpha}}}{[n]_{\abs{\alpha}}}-\frac{B_\alpha}
       {[n]_{\abs{\alpha}}d^{\abs{\alpha}}}\right)
       \Biggr\rvert\\
       &\qquad\quad\phantom{}+\sum_{k=1}^s\xi_k
         \E\left\lvert\sum_{\alpha\in\Bb_k}\left(I_\alpha-\frac{F_\alpha}
       {[n]_{\abs{\alpha}}d^{\abs{\alpha}}}\right)
       \right\rvert.
  \end{align*}
  These summands were already bounded in expectation in 
  Theorem~\ref{thm:genpoiapprox}, and applying these bounds proves
  the proposition.
\end{proof}
\begin{thm}\label{thm:bestpoiapprox}
  Let $G$ be a random $d$-regular graph on $n$ vertices
  with cycle counts $(\Cy{k},\,k\geq 3)$.
  Let $(Z_k,\,k\geq 3)$ be independent Poisson random variables
  with $\E Z_k=(d-1)^k/2k$.  For any $n\geq 1$ and $r,d\geq 3$,
  \begin{align*}
    \dtv\big((\Cy{3},\ldots,\Cy{r}),\,(Z_3,\ldots,Z_r)\big)
      &\leq\frac{\Cl{55}\sqrt{r}(d-1)^{3r/2-1}}{n}
   \end{align*}
\end{thm}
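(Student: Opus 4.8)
The plan is to deduce the theorem from Proposition~\ref{prop:genpoiapprox} by taking the bins to be the cycle-length classes. First, if $r>n^{1/10}$ or $d>n^{1/3}$, then comparing growth rates shows that $\Cr{55}\sqrt{r}(d-1)^{3r/2-1}/n\geq 1$ for a sufficiently large absolute constant $\Cr{55}$: when $d>n^{1/3}$ the numerator already exceeds $n$ since $3r/2-1\geq 7/2$, and when $r>n^{1/10}$ the factor $(d-1)^{3r/2-1}\geq 2^{3r/2-1}$ grows faster than any power of $n$. In either case the bound is trivial, since total variation distance never exceeds $1$. We may therefore assume $r\leq n^{1/10}$ and $d\leq n^{1/3}$, which is exactly what is needed to invoke Proposition~\ref{prop:genpoiapprox}.

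Let $\Ii$ be the collection of all cycles of length $r$ or less in $K_n$, and for $3\leq k\leq r$ let the bin $\Bb_k$ consist of all cycles of length exactly $k$. Then $I_k=\Cy{k}$; and since $\Bb_k$ contains $[n]_k/2k$ cycles, each with $\E Z_\alpha=(d-1)^k/[n]_k$, we get $\lambda_k=\E Z_k=(d-1)^k/2k$, matching the statement. Proposition~\ref{prop:genpoiapprox} then yields
\begin{align*}
  \dtv\big((\Cy{3},\ldots,\Cy{r}),\,(Z_3,\ldots,Z_r)\big)
    &\leq \Cr{52}\sum_{k=3}^r\xi_k\,\frac{[n]_k}{2k}\cdot
      \frac{k(d-1)^{k+r-1}}{n^{k+1}}
    \leq \frac{\Cr{52}}{2n}\sum_{k=3}^r\xi_k(d-1)^{k+r-1},
\end{align*}
where we used $[n]_k\leq n^k$.

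It remains to estimate $\sum_{k=3}^r\xi_k(d-1)^{k+r-1}$. Bounding $\xi_k\leq 1.4\lambda_k^{-1/2}=1.4\sqrt{2k}\,(d-1)^{-k/2}$ gives $\xi_k(d-1)^{k+r-1}=O\big(\sqrt{k}\,(d-1)^{k/2+r-1}\big)$. Factoring out $\sqrt{r}\,(d-1)^{r-1}$ and observing that $\sum_{k=3}^r(d-1)^{k/2}$ is a geometric series with ratio $(d-1)^{1/2}\geq\sqrt{2}$ bounded away from $1$, this last sum is $O\big((d-1)^{r/2}\big)$, dominated by its $k=r$ term. Hence $\sum_{k=3}^r\xi_k(d-1)^{k+r-1}=O\big(\sqrt{r}\,(d-1)^{3r/2-1}\big)$, and dividing by $n$ gives the asserted bound.

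The computation is routine once Proposition~\ref{prop:genpoiapprox} is in hand, so I do not anticipate a genuine obstacle. The one point worth emphasizing is where the improvement over the weaker $O\big((d-1)^{2r-1}/n\big)$ estimate comes from: it is precisely the $\xi_k=\min(1,1.4\lambda_k^{-1/2})$ factors of Proposition~\ref{prop:steinsmethod}, each of which saves a factor $(d-1)^{-k/2}$, together with the observation that the resulting sum over $k$ is geometric and therefore concentrates at $k=r$, which turns the exponent $k/2+r-1$ into $3r/2-1$.
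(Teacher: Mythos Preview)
Your proof is correct and follows essentially the same approach as the paper: you reduce to the range $r\leq n^{1/10}$, $d\leq n^{1/3}$, apply Proposition~\ref{prop:genpoiapprox} with bins given by cycle-length classes, and then sum the resulting geometric series using $\xi_k\leq 1.4\lambda_k^{-1/2}$. The paper's argument is the same in all essentials; your added remark explaining the role of the $\xi_k$ factors is a nice clarification not present in the original.
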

\begin{proof}
  If $d> n^{1/3}$ or $r>n^{1/10}$, then
  $\Cr{55}\sqrt{r}(d-1)^{3r/2-1}/n>1$ for a 
  sufficiently large choice of $\Cr{55}$,
  and the theorem holds trivially.  Thus we can assume that
  $d\leq n^{1/3}$ and $r\leq n^{1/10}$.
  
  Let $\lambda_k=(d-1)^k/2k$.
  With $\Ii_k$ defined as the set of all cycles in $K_n$ of length $k$,
  we apply the previous proposition with bins $\Ii_3,\ldots,\Ii_r$ to get
  \begin{align*}
    \dtv\big((\Cy{3},\ldots,\Cy{r}),\,(Z_3,\ldots,Z_r)\big)
      &\leq \Cr{52}\sum_{k=3}^r1.4\lambda_k^{-1/2}\sum_{\alpha\in\Ii_k}
      \frac{k
    (d-1)^{k+r-1}}{n^{k+1}}\\
    &=\sum_{k=3}^rO\Bigl(
      \frac{\sqrt{k}
    (d-1)^{k/2+r-1}}{n}\Bigr)\\
    &=O\Bigl(\frac{\sqrt{r}(d-1)^{3r/2-1}}{n}\Bigr).\qedhere
  \end{align*}
  
\end{proof}

\section{Eigenvalue fluctuations of random regular graphs}
\label{sec:eigenvalues}
Consider a random symmetric $n\times n$ matrix $X_n$
with eigenvalues $\lambda_1\geq\cdots\geq\lambda_n$.
As we mentioned in the introduction,
a \emph{linear eigenvalue statistic} is a random variable
of the form $\sum_{i=1}^nf(\lambda_i)$ for some function $f$.
A common problem in random matrix theory is to understand
the asymptotic behavior of linear eigenvalue statistics.
Typically, one shows convergence to a deterministic limit under one scaling (the
first-order behavior), and to a distributional limit under another scaling
(the second-order behavior).  The prototypical example is when
$X_n$ is a Wigner matrix: the first-order behavior is given by
Wigner's semicircle law (see \cite{BS} for a modern account
of Wigner's result), and for sufficiently smooth $f$,
the fluctuations from this are
normal \cite{SiS,BY}.

Recently, the problem of finding the fluctuations of
linear eigenvalue statistics was considered for 
random permutation matrices  \cite{BAD}, 
where for sufficiently smooth $f$,
the limiting distribution is non-Gaussian. 
This is striking because this behavior is  \emph{non-universal}.
The na\"ive expectation would have been that the eigenvalues of these matrices
should behave as in the \emph{Gaussian orthogonal ensemble}, which consists of Wigner matrices
with Gaussian entries.
In \cite{DJPP}, the same problem was considered
for the adjacency matrices of random
$d$-regular graphs drawn from the permutation model.
%, which typically
%shows the same behavior as the uniform model.
%  A random $2d$-regular
%graph on $n$ vertices in this model is formed by choosing
%random permutations $\pi_1,\ldots,\pi_d$ uniformly and independently from
%the symmetric group on $\{1,\ldots,n\}$, and making an edge in the graph
%between vertices $i$ and $j$ if $\pi_k(i)=j$ for some $k$.
%This model of random regular graphs is quite similar to the uniform model
%(see \cite{GJKW} for a concrete justification of this claim).
%The biggest differences are that it allows
%graphs to have loops and multiple edges, and that the expected number
%of $k$-cycles in the limit is $a(d,k)/2k$, where $a(d,k)$ is the number
%of cyclically reduced words on an alphabet of $d$ letters and their inverses.
%This can be computed (see \cite[Lemma~42]{DJPP}) to be
%\begin{align*}
%  a(d,k) &=\begin{cases}(2d-1)^k-1+2d&\text{if $k$ is even,}\\
%  (2d-1)^k+1&\text{if $k$ is odd,}
%  \end{cases}
%\end{align*}
%so the limiting expectation is nearly $(2d-1)^k/2k$, its value
%in the uniform model.
As with random permutation matrices,
for sufficiently smooth $f$,
the limiting fluctuations %of linear eigenvalue statistics for adjacency
%matrices of random graphs from the permutation model
are non-Gaussian if
$d$ is a fixed constant. On the other hand, if $d$ grows to infinity with $n$,
the limiting fluctuations are Gaussian. The first-order behavior of linear
eigenvalue statistics shows the same dichotomy, with the non-universal Kesten-McKay limit
when $d$ is fixed replaced by the semicircle law when $d$ grows
with $n$ \cite{DP,TVW}.

Our goal is to extend these fluctuation results to the uniform
model of random regular graph.
Following the approach of \cite{DJPP}, 
we will use Theorem~\ref{thm:bestpoiapprox} to estimate the distribution
of counts of \emph{cyclically non-backtracking walks}.
Using a connection between these counts and the graph's eigenvalues,
we compute the non-Gaussian limiting fluctuations in Theorem~\ref{thm:dfixedlimit}.
We will then show
in Theorem~\ref{thm:dgrowslimit} that when $d$ grows with $n$,
the eigenvalue fluctuations converge to nearly the same limit
as in the GOE.

\renewcommand{\Cy}[2][\infty]{C_{#2}^{(#1)}}
\newcommand{\CNBW}[2][\infty]{\mathrm{CNBW}_{#2}^{(#1)}}

If a walk on a graph begins and ends at the same vertex, we
call it \emph{closed}.
We call a walk on a graph \emph{non-backtracking} if it never follows
an edge and immediately follows that same edge backwards.
Non-backtracking walks are also known as irreducible.

     \begin{figure}
      \begin{center}
        \begin{tikzpicture}[scale=1.8,vert/.style={circle,fill,inner sep=0,
              minimum size=0.15cm,draw},>=stealth]
            \draw[thick] (0,0) node[vert,label=right:1](s0) {}
                  -- ++(0,1cm) node[vert,label=below right:2](s1) {}
                  -- ++(0.707cm,0.707cm) node[vert,label=right:3](s2) {}
                  -- ++(-0.707cm,0.707cm) node[vert,label=right:4](s3) {}
                  -- ++(-0.707cm,-0.707cm) node[vert,label=left:5](s4) {}
                  --(s1);
        \end{tikzpicture}
      \end{center}
      \caption{The walk $1\to 2\to 3\to 4\to 5\to 2\to 1$ is non-backtracking,
      but not cyclically non-backtracking.}
      \label{fig:cnbw}
    \end{figure}
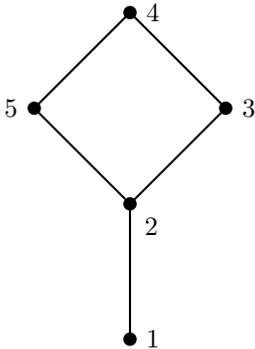  
 Consider a closed non-backtracking walk, and suppose that its last
 step is anything other than the reverse of its first step (that is, the
 walk does not look like the one given  in
 Figure~\ref{fig:cnbw}).  Then we call it a \emph{cyclically non-backtracking
 walk}.  These walks occasionally go by the name
 strongly irreducible.

Let $G_n$ be a random $d$-regular graph on $n$ vertices from the uniform
model,
and let $\Cy[n]{k}$ be the number of cycles of length $k$ in $G_n$.
  We define the random variable $\CNBW[n]{k}$ to be the number
 of cyclically non-backtracking walks of length $k$ in $G_n$.
Define $(\Cy{k},\, k\geq 3)$ to be
independent
Poisson random variables, with $\Cy{k}$ having mean $\lambda_k=(d-1)^k/2k$.
It will be convenient to define $\Cy{1}$, $\Cy{2}$, $\Cy[n]{1}$,
and $\Cy[n]{2}$
 as zero.

  Define
  \begin{align*}
    \CNBW{k} = \sum_{j\mid k} 2j\Cy{j}.
  \end{align*}
  For any cycle in $G_n$ of length $j$, where $j$ divides $k$, we obtain $2j$ 
  cyclically non-backtracking walks
  of length $k$ by choosing a starting point and direction and then
  walking around the cycle repeatedly.
  In fact, if $d$ and $k$ are small compared to $n$, 
  then these are likely
  to be the only cyclically non-backtracking walks of length~$k$ in $G_n$, as 
  the following proposition will show.
  
  \begin{prop}\label{prop:badwalks}
    Suppose $d\leq n^{1/3}$ and $k\leq n^{1/10}$.
    Let
    \begin{align*}
      \badw[n]{k} := \CNBW[n]{k}- \sum_{j\mid k} 2j\Cy[n]{j},
    \end{align*}
    the number of cyclically non-backtracking walks in the
    random $d$-regular graph $G_n$ that are not repeated walks around cycles.
    Then
    \begin{align*}
      \E \badw[n]{k} &\leq \frac{\Cl{badwalks}k^6(d-1)^k}{n}.
    \end{align*}
  \end{prop}
  \begin{proof}
    Call a cyclically non-backtracking walk \emph{bad} if it is not a repeated walk around
    a cycle.
    We just need to enumerate the possible bad walks and apply 
    Proposition~\ref{prop:McKayestimate}\ref{item:generalsubgraph} to bound
    the probability of each one.
    First, we give some notation first used in \cite{BroSha}.
    Let $v_0,\ldots, v_{k}\in\{1,\ldots,n\}$ satisfying $v_0=v_k$ be a sequence of vertices
    that forms a bad cyclically
    non-backtracking walk.
    %Let $H$
    %be the subgraph of $K_n$ given by the walk.
    Let $1\leq i\leq k$. We say that the $i$th step of the walk is
    \begin{itemize}
      \item \emph{free} if $v_i$ did not previously occur in the walk;
      \item a \emph{coincidence} if $v_i$ previously occurred in the walk,
        but the edge $v_{i-1}v_i$ did not;
      \item and \emph{forced} if the edge $v_{i-1}v_i$ previously occurred in the walk.
    \end{itemize}
    Let $\chi+1$ be the number of coincidences and $f$ the number of forced steps in the walk.
    With $v$ the number of vertices and $e$ the number of edges in the graph formed
    by the walk, we then have
    \begin{align*}
      v &= k-\chi-f,\\
      e &= k - f.
    \end{align*}
   It follows from the walk being bad that $\chi\geq 1$.
   
   \begin{clm}\label{clm:walkcounts}
     Consider walks on $K_n$ such that when the walk is viewed as a subgraph, all vertices
     have degree at most $d$. The number of such walks 
     with given values $\chi\geq 1$ and $f\geq 0$
     is at most
     $k^{3\chi+2}(d-1)^fn^{k-\chi-f}$.
   \end{clm}
   \begin{proof}[Proof of the claim]
     Imagine laying out the coincidences, then the forced steps,
     and then the free steps.
     Given that there are $\chi+1$ coincidences, there are
     $\binom{k}{\chi+1}\leq k^{\chi+1}$ possible subsets of indices
     $\{1,\ldots,k\}$ where the coincidences can occur.
     The vertex at a coincidence has already occurred in the walk,
     so there are fewer than $k$ choices for each of them,
     giving us a total of $k^{2\chi+2}$ choices so far.
     
     Forced steps can occur only after a coincidence or another forced step.
     After each coincidence, imagine assigning some number of the steps to be forced.
     The number of ways to do this is at most the number of weak compositions of $f$~elements
     into $\chi+1$ parts, $\binom{f+\chi}{\chi}$, which we can bound by $k^\chi$.
     At each forced step, the walk can only move along an edge that has already been traversed,
     so there are at most $d-1$ possible choices of vertices at each forced step.
     In all, this gives us at most $k^\chi(d-1)^f$ choices for the forced steps.
     
     At each of the $k-\chi-1-f$ free steps, we have at most $n$ choices of where to move, 
     and we have an additional $n$ choices for $v_0$, giving
     us another $n^{k-\chi-f}$ choices in all.
     Multiplying together these three bounds proves the claim.
   \end{proof}
   The probability of a given bad walk being found in $G_n$ is at most
   $\Cr{generalsubgraph}\bigl((d-1)/n\bigr)^{k-f}$
   by Proposition~\ref{prop:McKayestimate}\ref{item:generalsubgraph}.
   Applying Claim~\ref{clm:walkcounts} and summing over all possible bad walks,
   \begin{align*}
     \E \badw[n]{k} &\leq \sum_{\chi\geq 1}\sum_{f=0}^{k-1}
       k^{3\chi+2}(d-1)^fn^{k-\chi-f} \frac{\Cr{generalsubgraph}(d-1)^{k-f}}{n^{k-f}}\\
       &= \sum_{\chi\geq 1}\sum_{f=0}^{k-1} O\biggl(
         \frac{k^{3\chi+2}(d-1)^{k}}{n^\chi}
       \biggr)\\
       &= k^3(d-1)^k\sum_{\chi\geq 1}O\biggl(
         \frac{k^{3\chi}}{n^\chi}\biggr) = O\biggl(\frac{k^6(d-1)^k}{n}\biggr),
   \end{align*}
   completing the proof of Proposition~\ref{prop:badwalks}.
  \end{proof}

  \begin{cor}\label{thm:CNBWbound}
    \begin{align*}
      \dtv\left( \big( \CNBW[n]{k},\, 3\leq k\leq r  \big), 
        \big( \CNBW[\infty]{k},\, 3\leq k\leq r   \big)\right)
        \leq \frac{\Cl{11}\sqrt{r}(d-1)^{3r/2-1}}{n}.
    \end{align*}
  \end{cor}
\begin{proof}
  For any measurable function $f$ and random variables $X$ and $Y$,
  it holds that $\dtv(f(X),\,f(Y))\leq \dtv(X,Y)$.  
  It follows by Theorem~\ref{thm:bestpoiapprox} that
  \begin{align}
    \dtv \left( \bigg( \sum_{j|k}2j\Cy[n]{j},\, 3\leq k\leq r  \bigg), 
        \big( \CNBW[\infty]{k},\, 3\leq k\leq r   \big)\right)
        &\leq \frac{\Cr{55}\sqrt{r}(d-1)^{3r/2-1}}{n}.\label{eq:almosttv}
  \end{align}
  By the previous proposition,
  $\P[\badw[n]{k}\geq 1]\leq \Cr{badwalks}k^6(d-1)^k/n$.
  Summing these probabilities from $k=3,\ldots,r$,
  \begin{align}
    \bigg( \sum_{j|k}2j\Cy[n]{j},\,1\leq k\leq r  \bigg)=
    \big( \CNBW[n]{k},\,1\leq k\leq r  \big)\label{eq:vecequality}
  \end{align}
  with probability $1-O\bigl(r^6(d-1)^r/n\bigr)$.
  If two random variables
  are equal with probability $1-\eps$, then the total variation distance between their laws is at most $\eps$.  Thus
  the two random vectors in \eqref{eq:vecequality} have total variation
  distance $O\bigl(r^6(d-1)^r/n\bigr)$.  This fact and
  \eqref{eq:almosttv} prove the corollary.
\end{proof}

  \newcommand{\N}[2][\infty]{N_{#2}^{(#1)}}
To relate
Corollary~\ref{thm:CNBWbound}
to the eigenvalues of the adjacency matrix of $G_n$,
we define a set of polynomials
\begin{align*}
  \Gamma_0(x) & = 1, \\
  \Gamma_{2k}(x) &= 2 T_{2k}\left (\frac{x}{2} \right ) + \frac{d-2}{(d-1)^k}
  &\text{for $k \geq 1$,}\\
  \Gamma_{2k+1}(x) &= 2 T_{2k+1}\left(\frac{x}{2}\right)&\text{for $k \geq 0$.}
\end{align*}
Here
$\{T_n(x)\}_{n \in \mathbb{N}}$ are the Chebyshev polynomials of the first kind 
on the interval $[-1,1]$, defined inductively by
\begin{align*}
  T_0(x) &= 1,\\
  T_1(x) &= x,\\
  T_{n+1}(x) &= 2xT_n(x)-T_{n-1}(x),\quad n\geq 2.
\end{align*}

\begin{prop}[{\cite[Proposition~32]{DJPP}}]\label{prop:eigenlink}
  Let $A$ be the adjacency matrix of a (deterministic) $d$-regular graph $G$, and
  let $\lambda_1\ge\cdots\ge\lambda_n$ be the eigenvalues
  of $(d-1)^{-1/2}A$. Let $\mathrm{CNBW}_k$ be the number of cyclically
  non-backtracking walks of length~$k$ in $G$. Then
  \begin{align*}
    \sum_{i=1}^n\Gamma_k(\lambda_i)&=(d-1)^{-k/2}\mathrm{CNBW}_k.
  \end{align*}
\end{prop}
By Corollary~\ref{thm:CNBWbound}, we know the limiting distribution
of $\sum_{i=1}^nf(\lambda_i)$ when $f(x)=\Gamma_k(x)$.
The plan now is to extend this to a more general class of functions
by approximating by this
polynomial basis. We note the following bounds on the eigenvalues
of uniform random regular graphs.
\begin{prop}\label{prop:ebound}
  Let $G_n$ be a random $d$-regular graph on $n$ vertices
  with eigenvalues $\lambda_1\geq\cdots\geq\lambda_n$. Let
  $\lambda=\max_{i=2,\ldots,n}\abs{\lambda_i}$, the maximum nontrivial eigenvalue
  in absolute value.
  \begin{enumerate}[(a)]
    \item Suppose that $d\geq 3$ is fixed.  For any $\eps>0$, 
      \begin{align*}
        \P[\lambda>2\sqrt{d-1}+\eps]\to 0
      \end{align*}
      as $n\to\infty$.\label{item:dfixed}
    \item Suppose that $d=d(n)$ satisfies $d=o(n^{1/2})$.
      Then for some constant $K$, 
      \begin{align*}
        \P[\lambda>K\sqrt{d}]\leq\frac{\Cl{dgrows}}{n^2}
      \end{align*}
      for all $n$.
      \label{item:dgrows}
  \end{enumerate}
\end{prop}
\begin{proof}
  It is well known that (\ref{item:dfixed}) follows 
  from the results in \cite{Fri} by various contiguity
  results, but we cannot find an
  argument written down anywhere and will give one here.
  When $d$ is even, it follows from \cite[Theorem~1.1]{Fri} and
  the fact that for fixed $d$, permutation random graphs have no loops
  or multiple edges with probability bounded away from zero.
  This implies that the eigenvalue bound holds for permutation random
  graphs conditioned to be simple, and
  \cite[Corollary~1.1]{GJKW} transfers the result to the uniform model.
  When $d$ is odd (and $n$ even, as it has to be), 
  we apply \cite[Theorem~1.3]{Fri}, which gives
  the eigenvalue bound for graphs formed by superimposing $d$ random
  perfect matchings of the $n$ vertices. These are simple with probability
  bounded away from zero, and \cite[Corollary~4.17]{Wor99} transfers
  the result to the uniform model.
  
  Fact~(\ref{item:dgrows}) is proven in a more general context in
  \cite[Lemma~18]{BFSU}.
\end{proof}

 Following some facts from approximation theory, we will
 state the main result on the limiting distribution of linear
 eigenvalue statistics.
\begin{defn}
For $\rho>1$, let $E_\rho$ denote the image under the map $z\mapsto \frac{z+ z^{-1}}{2}$
of the open disc of radius~$\rho$ in the complex plane,
centered at the origin. We
call this the \emph{Bernstein ellipse} of radius~$\rho$. 
The ellipse has foci at $\pm 1$, and the sum
of the major semiaxis and the minor semiaxis is exactly
$\rho$. 
\end{defn}
\begin{prop}[{\cite[Theorem~8.1]{trefethen}}]\label{prop:approximationtheory}
  Suppose that $f\colon[-1,1]\to\RR$ can be analytically extended
  to $E_\rho$ and is bounded by $M$ there. Then $f$ has a unique expansion on $[-1,1]$ as
  \begin{align*}
    f(x) = \sum_{k=0}^{\infty}a_k T_k(x),
  \end{align*}
  and the coefficients of this expansion satisfy
  \begin{align*}
    \abs{a_0}\leq M,\qquad \abs{a_k}\leq\frac{2M}{\rho^k}.
  \end{align*}
\end{prop}
By applying the bound $\abs{T_k(x)}\leq 1$ and summing, we see that the approximations
$f_k(x) = \sum_{i=0}^k a_kT_k(x)$ satisfy
  \begin{align}
    \abs{f(x) - f_k(x)}\leq\frac{2M}{\rho^k(\rho-1)}\label{eq:chebapprox}
  \end{align}
  for $x\in[-1,1]$.

\begin{thm}\label{thm:dfixedlimit}
  Fix $d\geq 3$, and let $G_n$ be a random $d$-regular graph on
  $n$ vertices with adjacency matrix $A_n$.  
  Let $\lambda_1\geq\cdots\geq\lambda_n$ be the eigenvalues
  of $(d-1)^{-1/2}A_n$.
  
  Suppose that $f$ is a function such that $f(2z)$ is analytic on
  $E_\rho$, where $\rho=(d-1)^{\alpha}$
  for some $\alpha>3/2$.
  Then $f(x)$ can be expanded on $[-2,2]$ as
  \begin{align}
    f(x) = \sum_{k=0}^{\infty}a_k\Gamma_k(x),\label{eq:fexpansion}
  \end{align}
  and $Y_f^{(n)}:=\sum_{i=1}^nf(\lambda_i)-na_0$ converges
  in law as $n\to\infty$ to the infinitely divisible random variable
  \begin{align*}
    Y_f:=\sum_{k=1}^{\infty}\frac{a_k}{(d-1)^{k/2}}\CNBW{k}.
  \end{align*}
\end{thm}
\begin{proof}
  Let $f_k(x)=\sum_{i=0}^k a_i \Gamma_i(x)$.
  First, we show that $f_k(x)$ is a good approximation to $f(x)$.
  Applying Proposition~\ref{prop:approximationtheory} to $f(2x)$ gives
  the expansion \eqref{eq:fexpansion} and shows that
  \begin{align}
    \abs{a_k}\leq \Cl{M}(d-1)^{-\alpha k}\label{eq:coefficients}
  \end{align}
  for all $k\geq 1$.  
  On any interval $[-A,A]$ with $A>1$, the maximum of $\abs{T_k(x)}$ occurs
  at the endpoints. Using a well-known expression for $T_k(x)$, we have
  \begin{align}
    \max_{\abs{x}\leq A}\abs{T_k(x)} &= \frac{\bigl(A-\sqrt{A^2-1}\bigr)^k + \bigl(A + \sqrt{A^2-1}\bigr)^k}{2}.
    \label{eq:Tbound}
  \end{align}
  Applying this, one can see that for any $\delta>0$, it is possible to choose $\eps>0$
  such that for $\abs{x}\leq 2+\eps$,
  \begin{align*}
    \abs{\Gamma_k(x)} \leq (1+\delta)^k
  \end{align*}
  for all sufficiently large~$k$.
  Choosing $\delta$ small enough, this shows in combination with \eqref{eq:coefficients} that
  \begin{align}\label{eq:uniform}
    \sup_{\abs{x}\leq 2+\eps}\abs{f(x)-f_k(x)}\leq \Cl{M'}(d-1)^{-\alpha' k}
  \end{align}
  for some $\frac32<\alpha'<\alpha$.
  We also note that applying \eqref{eq:coefficients} and \eqref{eq:Tbound} in the same
  way with $A=d/2\sqrt{d-1}$ shows that $f_k\to f$ uniformly on $[-d/\sqrt{d-1},\,d/\sqrt{d-1}]$,
  which deterministically contains all the eigenvalues of $(d-1)^{-1/2}A_n$.

  The sum defining $Y_f$ converges almost
  surely, since it can be rewritten as
  \begin{align*}
    Y_f=\sum_{j=1}^{\infty}\sum_{i=1}^{\infty}\frac{a_{ij}}{(d-1)^{ij/2}}
    2j\Cy{j},
  \end{align*}
  and this is a sum of independent random variables, bounded in
  $L^2$ by \eqref{eq:coefficients}.
  Choose $\beta$ satisfying $\frac{1}{\alpha'}<\beta<\frac23$ and define
  \begin{align*}
    r_n&=\floor{\frac{\beta\log n}{\log(d-1)}},\\
    X_f^{(n)} &= \sum_{k=1}^{r_n}\frac{a_k}{(d-1)^{k/2}}\CNBW[n]{k}.
  \end{align*}
  We will use $X_f^{(n)}$ to approximate $Y_f^{(n)}$, noting that
  $X_f^{(n)}=\sum_{i=1}^nf_{r_n}(\lambda_i)-na_0$ by
  Proposition~\ref{prop:eigenlink}.
  By Corollary~\ref{thm:CNBWbound} and the fact that $\beta<\frac23$, the total variation distance
  between $X_f^{(n)}$ and $\sum_{k=1}^{r_n}(d-1)^{-k/2}a_k\CNBW{k}$
  vanishes as $n$ tends to infinity.  This sum converges
  almost surely to $Y_f$ as $n$ tends to infinity, 
  so $X_f^{(n)}$ converges in law
  to $Y_f$.  By Slutsky's Theorem, we need only show that
  $Y_f^{(n)}-X_f^{(n)}$ converges to zero in probability.
  
  Fix $\delta>0$.  We need to show that
  \begin{align*}
    \limn \P\left[\abs{Y_f^{(n)}-X_f^{(n)}} > \delta\right]=0.
  \end{align*}
  We have
  \begin{align*}
    \abs{Y_f^{(n)}-X_f^{(n)}}&\leq \sum_{i=1}^n \abs{f(\lambda_i)
      -f_{r_n}(\lambda_i)}.
  \end{align*}
  As noted before, $f_k(x)\to f(x)$ for any $\abs{x}\leq d/\sqrt{d-1}$.
  In particular, for the deterministic top eigenvalue
  $\lambda_1=d/\sqrt{d-1}$, we have $f_k(\lambda_1)\to f(\lambda_1)$.
  Thus
  $f(\lambda_i)
      -f_{r_n}(\lambda_i) < \delta/2$ for all sufficiently
  large $n$.
  
  Suppose that the remaining eigenvalues are contained in
  $[-2-\eps,2+\eps]$.  By \eqref{eq:uniform},
  \begin{align*}
    \sum_{i=2}^n \abs{f(\lambda_i)-f_{r_n}(\lambda_i)}
      &\leq  M(n-1)(d-1)^{-\alpha' r_n}
      \leq M n^{-\alpha'\beta+1},
  \end{align*}
  and this tends to zero since $\alpha'\beta>1$.
  For sufficiently large $n$, this sum is thus
  bounded by $\delta/2$.  We can conclude that
  for all large enough $n$,
  \begin{align*}
    \P\left[\abs{Y_f^{(n)}-X_f^{(n)}} > \delta\right]
      &\leq \P\left[\sup_{2\leq i\leq n}\abs{\lambda_i}\leq 2+\eps\right],
  \end{align*}
  and this tends to zero by Proposition~\ref{prop:ebound}\ref{item:dfixed}.
\end{proof}

In our next theorem, we extend this theorem to the case when the degree
grows with $n$. We will need a technical lemma on a normal approximation for the
Poisson distribution:
\begin{lemma}[Lemma~19 in \cite{Elliot}]\label{lem:Elliot}
  Suppose $X\sim\Poi(\lambda)$ and $W=(X-\lambda)/\sqrt{\lambda}$.
  Then $X$ can be coupled with $Z\sim N(0,1)$ so that
  $\E\lvert W-Z\rvert\leq 1/\sqrt{\lambda}$.
\end{lemma}
An entire function $f$ is said to be \emph{of order less than $m$} if
$\abs{f(z)}\leq \bigl\lvert e^{z^m}\bigr\rvert$ for all sufficiently large $z$.
Given a test function of order less than $m$, our theorem
gives conditions on the growth of the degree of the random regular graphs
such that the eigenvalue fluctuations converge to Gaussian.
The theorem also gives the limiting variance:
\begin{thm}\label{thm:dgrowslimit}
  Let $G_n$ be a random $d_n$-regular graph on $n$ vertices, 
  with $d_n\to\infty$ as $n\to\infty$.
  Let $A_n$ be the adjacency matrix of $G_n$, and 
  let $\lambda_1\geq\cdots\geq\lambda_n$ be the eigenvalues
  of $(d_n-1)^{-1/2}A_n$.
  
  Suppose that $f$ is entire with order less than $m$, which implies that
  it can be expressed as $f(x)=\sum_{k=0}^{\infty}a_k T_k(x/2)$.
  If $d_n\leq (\log n)^{\frac{2}{3m}-\eps}$ for some $\eps>0$, then
  \begin{align}
    \sum_{i=1}^n f(\lambda_i) - \E \sum_{i=1}^n f(\lambda_i)\label{eq:Yfn}
  \end{align}
  converges in law to normal with mean zero and variance
  $\frac12\sum_{k=3}^{\infty}ka_k^2$.
\end{thm}
\begin{proof}
  Note that the sum defining the limiting variance is finite
  by Proposition~\ref{prop:approximationtheory}.
  Choose $\beta$ satisfying $\frac23-m\eps<\beta<\frac23$, and let
  \begin{align*}
    r_n:=\floor{\frac{\beta\log n}{\log(d_n-1)}}.
  \end{align*}
  First, we show that the expression
  \begin{align}
    \sum_{k=3}^{r_n} \frac{ka_k}{(d_n-1)^{k/2}}\Bigl(\Cy{k}-\E\Cy{k}\Bigr).\label{eq:firstsum}
  \end{align}
  converges to the desired limit.
  Then, we will gradually change this expression while maintaining the same limit until
  we arrive at \eqref{eq:Yfn}.
  
  Let $\{Z_k\}_{k\in\NN}$ be i.i.d.\ standard Gaussians. By Lemma~\ref{lem:Elliot},
  this collection can be coupled with $\bigr\{\Cy{k}\bigl\}_{k\in\NN}$ so that
  \begin{align*}
    \E\biggl\lvert\frac{\sqrt{2k}}{(d_n-1)^{k/2}}\Bigl(\Cy{k}-\E\Cy{k}\Bigr)-Z_k\biggr\rvert
      \leq \frac{\sqrt{2k}}{(d_n-1)^{k/2}}.
  \end{align*}
  Thus the $L^1$ distance between \eqref{eq:firstsum}
  and $\sum_{k=3}^{r_n} \sqrt{k}a_kZ_k/\sqrt{2}$ is at most
  \begin{align*}
    \E\sum_{k=3}^{r_n} \frac{\sqrt{k}a_k}{\sqrt{2}}\Biggl\lvert
    \frac{\sqrt{2k}}{(d_n-1)^{k/2}}\Bigl(\Cy{k}-\E\Cy{k}\Bigr)-Z_k\Biggr\rvert
    &\leq \sum_{k=3}^{\infty}\frac{ka_k}{(d_n-1)^{k/2}},
  \end{align*}
  which vanishes as $n\to\infty$.
  This implies that \eqref{eq:firstsum} converges in law to a centered Gaussian
  with variance $\sum_{k=3}^{\infty}\frac{k}{2}a_k^2$.
  
  Now, we present some expressions and show that they converge to the same limit.
 \begin{align*}
   \sum_{k=3}^{r_n}\frac{a_k}{2(d_n-1)^{k/2}}\Bigl(\CNBW{k} - \E\CNBW{k}\Bigr)
   \tag*{\textbf{Expression 1:}}
 \end{align*}
  The difference between Expression~1 and \eqref{eq:firstsum} is
  \begin{align*}
    \sum_{k=3}^{r_n}\frac{a_k}{(d_n-1)^{k/2}}\sum_{\substack{j\mid k\\j<k}}j\Bigl(\Cy{j}-\E\Cy{j}\Bigr)
    &\leq \sum_{j=3}^{\infty}\sum_{i=2}^{\infty}\frac{ja_{ij}}{(d_n-1)^{ij/2}}\Bigl(\Cy{j}-\E\Cy{j}\Bigr)\\
    &= \sum_{j=3}^{\infty}O\biggl(\frac{ja_{2j}}{(d_n-1)^j}\biggr)\Bigl(\Cy{j}-\E\Cy{j}\Bigr),
  \end{align*}
  and the variance of this vanishes as $n\to\infty$.
  Thus the difference between Expression~1 and \eqref{eq:firstsum} converges to $0$ in probability.
  
 \begin{align*}
   \sum_{k=3}^{r_n}\frac{a_k}{2(d_n-1)^{k/2}}\Bigl(\CNBW[n]{k} - \E\CNBW{k}\Bigr)
   \tag*{\textbf{Expression 2:}}
 \end{align*}
 By Corollary~\ref{thm:CNBWbound} and our choice of $r_n$,
 the total variation distance between Expressions~1 and~2 vanishes as $n\to\infty$.

 \begin{align*}
   \sum_{k=3}^{r_n}\frac{a_k}{2(d_n-1)^{k/2}}\Bigl(\CNBW[n]{k} - \E\CNBW[n]{k}\Bigr)
   \tag*{\textbf{Expression 3:}}
 \end{align*}
 The difference between
  Expressions~2 and~3 is the deterministic quantity
  \begin{align}
    \sum_{k=3}^{r_n}\frac{a_k}{2(d_n-1)^{k/2}}\Bigl(\E\CNBW{k}-\E\CNBW[n]{k}\Bigr).\label{eq:23diff}
  \end{align}
  Using the decomposition
  \begin{align*}
    \CNBW[n]{k} &= \sum_{j\mid k}2j\Cy[n]{j} + \badw[n]{k}
  \end{align*}
  from Proposition~\ref{prop:badwalks}, we have
  \begin{align*}
    \E\CNBW{k}-\E\CNBW[n]{k}
      &= \sum_{j\mid k} \Bigl((d-1)^j-2j\E\Cy[n]{j}\Bigr)
        - \E\badw[n]{k}.
  \end{align*}
  By \cite[eq.~(2.2)]{MWW} and Proposition~\ref{prop:badwalks},
  this is $O\bigl(k^6(k+d)(d-1)^k/n\bigr)$.
  By our choice of $r_n$ and the fact that $a_k\to 0$ as $k\to\infty$,
  equation~\eqref{eq:23diff} vanishes as $n\to\infty$.
 \begin{align*}
   \sum_{i=1}^n f(\lambda_i) - \E \sum_{i=1}^n f(\lambda_i)
   \tag*{\textbf{Expression 4:}}
 \end{align*}
  Let $f_k := \sum_{i=0}^ka_kT_k(x/2)$.
  By Proposition~\ref{prop:eigenlink} and the fact that
  $\lambda_1$ is deterministic, Expression~3 is equal to
  \begin{align*}
    \sum_{i=2}^n f_{r_n}(\lambda_i) - \E \sum_{i=2}^n f_{r_n}(\lambda_i).
  \end{align*}
  Thus it suffices to show that $\sum_{i=2}^n \bigl(f(\lambda_i)-f_{r_n}(\lambda_i)\bigr)$
  vanishes in $L^1$.

  Let $E$ be the event that $\sup_{i=2,\ldots,n}\abs{\lambda_i}\leq K$, where $K$
  is the constant from Proposition~\ref{prop:ebound}\ref{item:dgrows} that makes
  $\P[E^C]\leq \Cr{dgrows}n^{-2}$.
  We have
  \begin{align*}
    \E\biggl\lvert\sum_{i=2}^n \bigl(f(\lambda_i)-f_{r_n}(\lambda_i)\bigr)\biggr\rvert
    &\leq \Ee_1+\Ee_2+\Ee_3,
  \end{align*}
  where
  \begin{align*}
    \Ee_1 &:=  \E\biggl[\1_E\sum_{i=2}^n\bigl\lvert f(\lambda_i)-f_{r_n}(\lambda_i)\bigr\rvert\biggr],\\
    \Ee_2 &:= \E\biggl[\1_{E^C}\sum_{i=2}^n\bigl\lvert f(\lambda_i)\bigr\rvert\biggr],\\
    \Ee_3 &:= \E\biggl[\1_{E^C}\sum_{i=2}^n\bigl\lvert f_{r_n}(\lambda_i)\bigr\rvert\biggr],
  \end{align*}
  and we need to show that these quantities vanish as $n\to\infty$.
  
  By Proposition~\ref{prop:approximationtheory},
  \begin{align}
%    \abs{a_k} &\leq \frac{2C\exp\Bigl(K^m\Bigl(\frac{\rho^2+1}{2\rho}\Bigr)^m\Bigr)}{\rho^k}\\
%    \intertext{and}
%    \sup_{\abs{z}\leq K}\abs{f(z)-f_k(z)} &\leq
%      \frac{2C\exp\Bigl(K^m\Bigl(\frac{\rho^2+1}{2\rho}\Bigr)^m\Bigr)}{\rho^k(\rho-1)}
    \abs{a_k} &\leq 2\Bigl[\sup_{z\in E_\rho}f(2z)\Bigr]\rho^{-k}
      \leq 2\exp\biggl[ \Bigl( \rho+\frac{1}{\rho}\Bigr)^m\biggr]\rho^{-k}
      \label{eq:akentirebound}
  \end{align}
  for sufficiently large $\rho$.
  By \eqref{eq:Tbound}, $\sup_{x\in[-K,K]}\abs{T_k(x/2)} = O(K^k)$.
  This gives us
  \begin{align*}
    \sup_{x\in[-K,K]} \abs{f(x) - f_k(x)}
      &\leq \sum_{i=k+1}^{\infty}\abs{a_iT_i(x/2)}
      = O\biggl(\exp\biggl[ \Bigl( \rho+\frac{1}{\rho}\Bigr)^m\biggr]
        \biggl(\frac{K}{\rho}\biggr)^{k+1}\biggr).
  \end{align*}
  Set $\rho=k^{1/m}$ to approximately optimize this, and substitute $k=r_n$ to get
  \begin{align*}
    \sup_{x\in[-K,K]}\abs{f(x)-f_{r_n}(x)} &\leq
      O(1) \exp\biggl(\frac{\beta\log n}{\log(d_n-1)} \Bigl( O(1) - \frac{\log\log n - \log\log(d_n-1)}{m}
        \Bigr)\biggr).
  \end{align*}
  By the condition $d_n\leq(\log n)^{\frac{2}{3m}-\eps}$,
  \begin{align*}
    \sup_{x\in[-K,K]}\abs{f(x)-f_{r_n}(x)} &\leq
      O(1) \exp\biggl(\frac{\beta\log n}{\log(d_n-1)} \Bigl( O(1) - \frac{\frac{1}{\frac{2}{3m}-\eps}\log(d_n-1) 
        - \log\log(d_n-1)}{m}
        \Bigr)\biggr)\\
        &=       O(1) \exp\biggl(
        O\Bigl(\frac{\beta\log n\log\log(d_n-1)}{m\log(d_n-1)}\Bigr) -
          \frac{\beta}{\frac23-m\eps}\log n
        \biggr)\\
        &= O(1) \exp\biggl(\log n\biggl(o(1)-
          \frac{\beta}{\frac23-m\eps}\biggr).
  \end{align*}
  Since $\beta>\frac23-m\eps$, this expression is $o(n^{-1})$, and
  \begin{align*}
    \Ee_1 &\leq (n-1)\sup_{x\in[-K,K]}\abs{f(x)-f_{r_n}(x)}\to 0
  \end{align*}
  as $n\to\infty$.

  Next, we show that $\Ee_2\to 0$. For large enough $d_n$, it holds that if $\abs{x}\leq d_n(d_n-1)^{-1/2}$,
  then
  \begin{align*}
    \abs{f(x)} &\leq \exp\biggl(\frac{d_n^m}{(d_n-1)^{m/2}}\biggr) = \exp\bigl(O\bigl(d_n^{m/2}\bigr)\bigr)
      = \exp\bigl(O\bigl((\log n)^{1/3}\bigr)\bigr).
  \end{align*}
  As $E^C$ occurs with probability at most $\Cr{dgrows}n^{-2}$,
  \begin{align*}
    \Ee_2 &\leq  O\bigl(n^{-2}\bigr) (n-1)\exp\Bigl(O\bigl((\log n)^{1/3}\bigr)\Bigr)\to 0
  \end{align*}
  as $n\to\infty$.
  
  Last, we consider $\Ee_3$.
  We apply \eqref{eq:akentirebound} with $\rho=k^{1/m}$ to show that for some $C$ depending on
  $m$ but not $k$,
  \begin{align*}
    \abs{a_k}\leq C^kk^{-k/m}
  \end{align*}
  for all $k\geq 1$.
  By \eqref{eq:Tbound}, for all $\abs{x}\leq d_n(d_n-1)^{-1/2}$,
  \begin{align*}
    \abs{T_k(x/2)} \leq d_n^{k/2}.
  \end{align*}
  Thus
  \begin{align*}
    f_{r_n}(x) &\leq a_0 + \sum_{k=1}^{r_n}\biggl(\frac{Cd_n^{1/2}}{k^{1/m}}\biggr)^k
      \leq a_0 + \sum_{k=1}^{\infty}\biggl(\frac{C(\log n)^{\frac{1}{3m}}}{k^{1/m}}\biggr)^{k}.
  \end{align*}
  Let $N_n=\floor{(2C)^m(\log n)^{1/3}}$, and break the sum into two pieces,
  one from $1$ to $N_n$ and the other from $N_n+1$ to $\infty$.
  Each term in the first piece is at most
  $C^{N_n}(\log n)^{N_n/3m}$, and a bit of analysis shows that
  \begin{align*}
    N_nC^{N_n}(\log n)^{N_n/3m} = o(n).
  \end{align*}
  The second piece is $o(1)$, as can be seen by comparing it to a geometric series.
  Thus
  \begin{align*}
    \Ee_3 \leq O\bigl(n^{-2}\bigr) (n-1)o(n)\to 0
  \end{align*}
  as $n\to\infty$.
\end{proof}

\begin{rmk}\label{rmk:variance}
  The only difference between the limiting distributions of
  Theorems~\ref{thm:dfixedlimit} and \ref{thm:dgrowslimit}
  and those of the permutation model of random graph
  in \cite{DJPP} derives from the slightly different
  expectations of $\CNBW{k}$ in the two models, and 
  from the fact that
  $\CNBW{1}=\CNBW{2}=0$ in the uniform model.
  The limiting variance in Theorem~\ref{thm:dgrowslimit} is the same
  as for eigenvalue fluctuations of the GOE, except that the coefficients
  $a_1$ and $a_2$ are ignored. (As the variance term for the GOE fluctuations
  can be expressed in many different ways, this is not entirely obvious. See
  Section~1.5 and in particular Proposition~3 from \cite{Elliot}.)
\end{rmk}

\subsection*{Acknowledgments}
The author gratefully acknowledges Ioana Dumitriu for pointing out
similarities between \cite{MWW} and \cite{DJPP}, 
and Soumik Pal and Elliot Paquette
for their general assistance.

\bibliographystyle{alpha}
\bibliography{uniform-stein}

\end{document}